\newtheorem{theorem}{Theorem}
\newtheorem{lemma}{Lemma}
\newcommand{\eq}[1]{(\ref{#1})}
\newcommand{\la}{\lambda}
\newcommand{\eps}{\varepsilon}
\newcommand{\RR}{\mathbb{R}}
\renewcommand{\Re}{\mathop{\rm Re}\nolimits}
\newcommand{\md}{d\kern-0.035cm\char39\kern-0.03cm}
\begin{document}

\title{Existence of traveling waves for the generalized FKPP equation}

\author{Richard Koll{\'a}r \\
Department of Applied Mathematics and Statistics\\
Faculty of Mathematics, Physics and Informatics \\
Comenius University \\
Mlynsk{\'a} dolina,  Bratislava, Slovakia \\
E-mail: {\tt kollar@fmph.uniba.sk}\\
\\
\\
Sebastian Novak \\
Institute of Science and Technology Austria \\
Am Campus 1, 3400 Klosterneuburg, Austria \\
e-mail: {\tt sebastian.novak@ist.ac.at}
}

\maketitle

\begin{abstract}
Variation in genotypes may be responsible for differences in dispersal rates, directional biases, 
and growth rates of individuals. These traits may favor certain genotypes and enhance their spatio-temporal 
spreading into areas occupied by the less advantageous genotypes. We study how these factors influence the 
speed of spreading in the case of two competing genotypes and show that under the assumption of maintenance 
of spatially homogeneous total population the dynamics  of the frequency of one of the genotypes is approximately 
described by the generalized Fisher-Kolmogorov-Petrovskii-Piskunov (FKPP) equation. This generalized FKPP equation 
with (nonlinear) frequency dependent  diffusion and advection terms admits traveling wave solutions (fronts/clines)  
that characterize the invasion of the dominant genotype. 
Our existence results generalize the classical theory for traveling waves for the FKPP with constant coefficients.
Moreover for the particular case of the quadratic (monostable) nonlinear growth-decay rate in the generalized FKPP 
we study in details the influence of the variance in diffusion and mean displacement rates of the two genotypes on 
the minimal wave propagation speed.  
\end{abstract}

\section{Introduction}
We study the one-dimensional {\it generalized Fisher-Kolmogorov-Petrovsky-Pis\-kunov  (gFKPP)} partial differential equation
\begin{equation}
\frac{\partial p}{\partial t} =  D(p) \frac{\partial^2 p}{\partial x^2} - M(p) \frac{\partial p}{\partial x} + f(p)\, ,
\label{FKPP}
\end{equation} 
where 
$$
D(p) = pD_2 + (1-p)D_1, \qquad \qquad M(p) = pM_2 + (1-p)M_1\, ,
$$
and $f(p)$ is a continuous function, $f(0) = f(1) = 0$, $D_1, D_2 > 0$ and $M_1, M_2$ are real parameters,  
$t$ is time, and $x\in \RR$ is the spatial coordinate.
The equation \eq{FKPP} is a generalization of the ubiquitous FKPP equation ($D_1 = D_2 = D$, $M_1 = M_2 =M=0$)
of the form 
\begin{equation}
\frac{\partial p}{\partial t} =   D \frac{\partial^2 p}{\partial x^2} + f(p)\, ,
\label{tFKPP}
\end{equation} 
that serves as a mathematical prototype of an interaction of diffusion with nonlinear reaction terms 
(\cite{Fisher, KPP}, see also \cite{Murray,Kot} for a survey of related literature, particularly 
in the context of mathematical ecology).  
The advection term $-M\partial p/\partial x$ can be removed from \eq{tFKPP} by a change of 
the reference frame $t \rightarrow t + Mx$.
In the traditional setting $p$ in \eq{tFKPP} represents a non-dimensionalized population of 
single species in one dimensional environment that can serve as a simple approximation of the 
evolution in the real environment  under the assumptions of directional homogeneity. 
The nonlinear reaction term has often the form that enables logistic growth of the population 
$f(p) = kp(1-p)$, and thus the population has two spatially homogeneous equilibria $p = 0$ 
(an elimination of the species) and $p = 1$ (a population saturation limited by the environmental resources), 
although various other nonlinearities with $f(0) = f(1) = 0$ are often considered. 

{\bf Generalized FKPP Equation.}
The FKPP equation in its non-dim\-en\-sio\-na\-lized form \eq{tFKPP} is typically formulated 
in the mathematical literature as a phenomenological model for the evolution of a population with 
an environmental saturation limit.
In evolutionary genetics, the FKPP equation can be derived as a model for the evolution of 
the frequency $p=p(x,t)$ of one of the two (competing) genotypes present in a spatially distributed 
population \citep{Nagylaki1975}.
Then, $D$ can be interpreted as the dispersal coefficient capturing the propensity and typical length 
scale of individual migration in space.
The linear advection term $M$ describes a directional bias in movement behavior and may be due to 
a systematic directional preference of individuals, or the presence of a gradient (slope, wind, current of water) in the habitat.
In the presence of type-dependent dispersal, i.e., the two genotypes have different dispersal coefficients 
$D_1$, $D_2$, and different advection terms $M_1$, $M_2$, the generalized FKPP equation \eq{FKPP} 
can be derived analogously to the type-independent case \citep{Nagylaki1980, Novak2014}.
In Section~\ref{s:derivation}, we present a formal argument well-known in the field of evolutionary 
genetics, but very rarely cited in the mathematical literature.
The nonlinear equation \eq{FKPP} is derived from the system of two uncoupled linear reaction-diffusion equations for two genotypes.
The coupling is introduced by the formal, and in the population genetics literature widely accepted,
assumption that the total population of the two genotypes is kept homogeneous in space.
This results in locally heterogeneous diffusion and advection terms $D(p)$ and $M(p)$, and a nonlinear term $f(p)$.
The growth rate $k$ in $f(p)=kp(1-p)$ is equal to the difference of the (absolute) growth rates $r_1$ 
and $r_2$ of the two genotypes, $k=r_1-r_2$, that may describe, e.g., the action of natural selection.
Thus, equation \eq{FKPP} can be used to study the influence of differences in dispersal coefficient, 
directional bias, and growth rates on the evolutionary dynamics of the genotypes.

{\bf Traveling Waves.}
In the context of the FKPP equation, traveling wave solutions
$$
p(x,t) = P(\xi), \qquad 
\xi = x -ct\, ,
$$
of \eq{tFKPP} (and alternatively of \eq{FKPP}), where $c$ is the traveling wave speed, are traditional 
tools to study spatial patterns of genotype frequencies.
\cite{Fisher} was the first to use traveling waves to model the invasion of an advantageous mutation into an ancestral population.
Modifying the function $f(p)$, traveling waves may also provide a theoretical mechanism to create a genetic barrier within a population leading to speciation \citep{Bazykin1969} or describe gene frequency patterns that emerge in hybrid zones after the secondary contact of species \citep{Barton1979}.
Similar patterns emerge in heterogeneous environments, i.e., if $f=f(p,x)$ depends explicitly on space \citep{Nagylaki1975}.

The traveling wave profile (also called a front or a cline) satisfies the second order ordinary differential equation
\begin{equation}
-cP' = D(P)P'' - M(P)P' + f(P)\, ,
\label{secondorder}
\end{equation}
where $P'$ denotes $dP/d \xi$. We will require that the admissible solutions of \eq{secondorder} 
on $\xi \in (-\infty, \infty)$ satisfy either 
\begin{equation}
\mbox{$P(\xi) \in [0,1]$ for all $\xi$ real,}\ \ 
\mbox{$P(\xi) \rightarrow 1$ as $\xi \rightarrow -\infty$,} \quad
\mbox{$P(\xi) \rightarrow 0$ as $\xi \rightarrow \infty$.}
\label{admsol2}
\end{equation}
or 
\begin{equation}
\mbox{$P(\xi) \in [0,1]$ for all $\xi$ real,}\ \ 
\mbox{$P(\xi) \rightarrow 0$ as $\xi \rightarrow -\infty$,} \quad
\mbox{$P(\xi) \rightarrow 1$ as $\xi \rightarrow \infty$.}
\label{admsol1}
\end{equation}
These solutions represent a spatio-temporal invasion of the dominant genotype into a region populated by its receding counterpart.

{\bf Literature.}
Existence of traveling waves and their stability for \eq{FKPP} with quadratic  and cubic $f(p)$ is a well studied subject.%
\footnote{The quadratic $f(p)$ is also called {\it monostable} as the spatially homogeneous reduced dynamical system 
$p_t = f(p)$ has in that case one stable and one unstable equilibrium. On the other hand,  a cubic $f(p)$ is called {\it bistable} as in that case there are two stable equilibria.}
\cite{Fisher} proposed the model and numerically calculated the wave profile for a quadratic $f(p)$ in his study of a propagation of an advantageous gene in a population. \cite{KPP} rephrased the problem in terms of dynamical systems, related existence of the traveling waves to existence of heteroclinic orbits  and showed that in the monostable case the central role is played by the so called critical speed that is the minimal speed for which the traveling wave of the type \eq{admsol2} exists.  

 The critical wave speed for monostable nonlinearities and the unique wave speed for bistable nonlinearities were characterized by minimax and maximin principles in \cite{Hadeler1975, Hadeler1987}. In \cite{Hadeler1987}  the ideas of \cite{Conley1978} (see also \cite{Smoller1982}) were reformulated and used to  characterize existence of traveling waves for \eq{FKPP} with a general nonlinearity. The argument is based on  a mechanical analogue of the system in which the wave speed $c$ plays the role of a (positive or negative) friction coefficient. Existence of traveling waves for general nonlinearities $f(p)$
is also discussed in details in  \cite{VVV94}  using variational principles. 

Stability of traveling waves in exponentially weighted Banach spaces was studied by  \cite{Sattinger1976}. More recently, existence of traveling waves for the degenerate parabolic equations of type 
$p_t = [D(p)p_x]_x + f(p)$,  where $D(0) = 0$ and $D(p)$ is strictly increasing, and $f(p) > 0$ for $p \in (0,1)$, was studied using shooting arguments in \cite{Maini1996} where authors also survey literature on the subject. 

The speed of the traveling wave for the monostable nonlinearity $f(p) =  kp(1-p)$ is determined by the instability of the homogeneous state $p = 0$ and the traveling wave is called pulled, as the leading edge of the wave at $p \approx 0$ pulls the bulk of the wave at $p \approx 1$. However, in applications the quadratic nonlinearity  does not accurately describe the growth of the population close to $p=0$. \cite{BD1997} demonstrated that the critical wave speed changes if the nonlinearity $f(p) =kp(1-p)$ is modified close to $p=0$ and \cite{DKP2007} and \cite{DK2015} showed how the speed of the wave is asymptotically modified if the nonlinearity $f(p)$ is altered in the $\eps$ neighborhood of $p=0$  using the geometric blow-up technique. The same phenomenon from a different perspective was analyzed in \cite{DMS2003} where a stochastic FKPP equation was considered. 

A more general problem of dynamics and asymptotic behavior of the solutions as $t \rightarrow \infty$ to the Cauchy problem for \eq{tFKPP} on $\RR^n$  was studied in \cite{AW1978} for a bistable $f(p)$. The authors show that for a certain class of initial data, close enough to a traveling wave profile,  the solution to \eq{tFKPP} asymptotically approaches the traveling wave solution. These results were extended by  \cite{FMcL1977,FMcL1980} who analyzed the problem in one dimension using the results on asymptotic stability of traveling waves. 
Very recently these results were extensively generalized using the phase plane analysis in the seminal works of  \cite{Polacik2015, Polacik2016} who was inspired by \cite{DGM2014}. His method  requires only Lipschitz continuity of $f(p)$ with multiple zeros in $[0,1]$. The techniques used by Pol{\'a}{\v c}ik are geometrical and they are not based on the stability of the traveling waves. Thus his results also extend to degenerate problems where $f'(p) = 0$ at some zeros of $f(p)=0$ for which the stability results are not, in general, available. Furthermore, he was able to remove the technical assumption on monotonicity of the initial data that was used in the existing literature. See \cite{Polacik2016} for more detailed list of the references on the subject. 

From the perspective of applications, the FKPP equation has a long tradition in modeling spatially distributed systems in many scientific disciplines. It has been applied in population genetics to model the dynamics of gene frequencies \citep{Fisher} to predict rates of introgression of genotypes, and how their spatial spread may be initialized and interrupted \citep{Barton2011}.
The FKPP equation also has ecological \citep{Matsushita1999} and chemical applications \citep{Xin2000}, as well as applications in evolutionary game theory as a framework to select the {\it spatially dominant} equilibrium from a set of evolutionarily stable strategies 
\citep{Hofbauer1999}.

{\bf Our Work.}
 In Section~\ref{s:derivation} we present  for the sake of completeness the derivation of \eq{FKPP} in the context of evolutionary genetics. 
Section~\ref{s:ds} contains reformulation of the main problem in the language of dynamical systems, description of the symmetries of the system, and also an introduction of a notation and a terminology used. Existence and nonexistence of the traveling wave solutions of \eq{FKPP} satisfying \eq{admsol2} and \eq{admsol1} is characterized in Theorem~\ref{th:existence} in Section \ref{s:existence} that is a consequence of Lemmae 1--4. The theorem characterizes  the type of the range of values of the wave speed $c$ for which the traveling wave exists depending on the number of roots of the nonlinearity $f(p)$ in $[0,1]$. Our results agree with the results for the FKPP equation \eq{tFKPP} \citep{Sattinger1976,Hadeler1987,VVV94}. 

Furthermore, in Section \ref{s:homogeneous}
in the particular case when the diffusion coefficients of both species agree ($D_1 = D_2= D$) and $f(p) = kp(1-p)$ we show that the range of speeds of traveling waves satisfying \eq{admsol2} (the results for \eq{admsol1} are analogous) is an interval $[c^{\ast}, \infty)$ for a specific value of $c^{\ast} = c^{\ast}(k, M_1, M_2, D)$. 
The results summarized in Theorem~\ref{th:homo} identify the role of advection terms $M_1$ and $M_2$ play in determining the traveling wave velocity. It is well-know that in the case of \eq{tFKPP} the critical lower bound $c^{\ast} = 2\sqrt{Df'(0)}$ corresponds to the natural threshold determined by the local dynamics of \eq{secondorder} close to $P=0$ and that the critical wave  for $c = c^{\ast}$ is pulled, i.e., the instability of the state $P=0$ pulls the wave forward. 
On the other hand, in the case of \eq{FKPP} the variable advection speed, $M_1 \neq M_2$, plays a significant role. First, if $M_2 - M_1 \le 2\sqrt{Df'(0)}$ then the drift $M_2$ does not influence $c^{\ast}$ and the critical wave is pulled with the speed $c^{\ast} = 2\sqrt{Df'(0)}$ in the reference frame moving with the velocity $M_1$. However, if $M_2 - M_1 > 2\sqrt{Df'(0)}$ then the drift
of the bulk of the wave at $P \approx 1$ is supercritical, i.e., it is faster than the pulling speed of the tail of the wave, $P \approx 0$, and the wave becomes pushed.%
\footnote{See \cite{Stokes1976, vanSaarloos2003, DKP2007} for more explanation of the term pushed and pulled wave in the context of front propagation in reaction-diffusion equations.}
 For $M_2 - M_1 \rightarrow \infty$ the velocity approaches the naturally expected value $(M_1+M_2)/2$ (in the static frame of reference). Our method of proof can be interpreted as a generalization of the ideas of \cite{Hadeler1987} although  it is formulated in the language of phase portrait analysis of a planar dynamical system rather than its mechanical analogue. 

In Section~\ref{s:nonhom} we discuss our numerical results in the case of non-uniform diffusion, $D_1 \neq D_2$. Although the pattern of dependence of $c^{\ast}$ on $M_2 - M_1$ remains the same, i.e., for $M_2 - M_1$ less than some transition value the minimum wave speed is equal to $2\sqrt{D_1f'(0)}$ and it corresponds to the pulled wave, beyond this transition value only pushed waves exists. We numerically calculate the value of $M_2 - M_1$ at which the transition occurs and the results are quite surprising. For moderate values of $D_2/D_1$ the transition point depends approximately linearly on $D_2$ (for a fixed value of $D_1$) and it moves to higher values for $D_2 < D_1$ and lower values for $D_2 > D_1$. However, if $D_2 \gg D_1$ or $D_2 \ll D_1$, the change of the location of the transition point turns the other way, even beyond the transition point for $D_1 = D_2$.  
On the other hand, the critical wave speed $c^{\ast}$ for large values of $M_2 - M_1$ grows approximately linear with $M_2 - M_1$. Our numerical simulations indicate that the asymptotic slope depends approximately linearly on the logarithm of $D_2/D_1$.  
Finally, in Section~\ref{s:conclusion} we discuss our results and formulate open problems stemming from our analysis.

\section{Derivation of the gFKPP Equation}
\label{s:derivation}
Within this section we derive the equation \eq{FKPP} using the steps in the formal argument  of \cite{Nagylaki1980} (see also \citep{Novak2014}). 
We show that under certain  specific assumptions the reduced dynamics characterized by \eq{gFKPP} is an approximation of the system of  reaction-diffusion equations describing the evolution of populations of two genotypes, where $p$ represents the fraction of one of the genotypes in the total population.

Consider the evolution of populations of $m$ different genotypes in a homogeneous one dimensional space. Each of the genotypes is characterized by its own genotype-specific dispersal rate (diffusion coefficient) $D_j$, mean displacement coefficient (advection, drift)  $M_j$, and growth rate $r_j$. 
The population dynamics is characterized by the system of reaction-diffusion equations
\begin{equation}
\partial_t n_j  =  D_j \partial_{xx} n_j - M_j \partial_x n_j + f_j (\vec{n})\, ,\quad  j = 1, 2, \dots, m, 
\label{eqn1}
\end{equation}
where $n_j = n_j(x,t)$ are the populations of individual genotypes, $D_j$, $M_j$, and $f_j$, respectively, their dispersal, mean displacement, and growth rates, and $\vec{n} = (n_1, n_2, \dots, n_m)$,
The variance in diffusion, advection, and growth coefficients of different genotypes is biologically justified. 
\cite{Edelaar2012} pointed out that dispersal properties often differ between the (geno-)types represented in natural populations. For instance, this is the case in aquatic species with differential capability of resisting a unidirectional current.
Also, \cite{Lutscher2007} used reaction-diffusion equations \eq{eqn1} to show conditions for this form of type-dependent dispersal under which inferior competitors may evade into upstream regions.

The total population of the individuals of all genotypes $N = n_1 + \dots + n_m$ satisfies the equation
\begin{equation}
\partial_t N  = \sum_{j=1}^{m} D_j \partial_{xx}  n_j   -  \sum_{j=1}^m M_j \partial_x n_j + \sum_{j=1}^m f_j (\vec{n})\, .
\label{totaleq}
\end{equation}
We denote $p_j = n_j / N$ the frequency of the $j$-th genotype in the population, i.e., $n_j = p_jN$, and 
$\vec{p}N = (p_1N, \dots, p_mN)$. 
Then \eq{totaleq} can be written as
\begin{equation}
\partial_t N  = \sum_{j=1}^{m} D_j \partial_{xx}  p_j N   -  \sum_{j=1}^m M_j \partial_x p_j N 
+ \sum_{j=1}^m f_j (\vec{p}N)\, .
\label{totaleqp}
\end{equation}
The dynamics of $p_j= n_j/N$ is governed by
\begin{eqnarray}
\partial_t p_j & = & \frac{1}{N} \left( \partial_t n_j - p_j \partial_t N\right) \nonumber \\
& = & 
\frac{1}{N}\Big[  D_j \partial_{xx}p_jN - M_j \partial_x p_j N+ f_j (\vec{p}N)  \Big. 
 \label{fullp} \\
& & \phantom{\frac{1}{N}aaa} 
\left. - 
p_j  \left(  \sum_{i=1}^{m} D_i \partial_{xx}  p_i N   -  \sum_{i=1}^m M_i \partial_x p_i N + \sum_{i=1}^m f_i (\vec{p}N)
\right)  \right]\, . \nonumber
\end{eqnarray}
At this point we make a formal assumption that $N$ is spatially homogeneous, i.e., $N = N(t)$; its validity is discussed below. Then the system \eq{fullp} reduces to 
\begin{eqnarray}
\partial_t p_j  & =& 
 D_j \partial_{xx}p_j - M_j \partial_x p_j + \frac{1}{N}f_j (\vec{p}N)\nonumber  \\ 
& &  - 
p_j  \left(  \sum_{i=1}^{m} D_i \partial_{xx}  p_i    -  \sum_{i=1}^m M_i \partial_x p_i  + \frac{1}{N}\sum_{i=1}^m f_i (\vec{p}N)
\right)\, . \label{redp}
\end{eqnarray}

The system \eq{redp} for $j = 1, \dots, m$, can be considered separately from the original problem with $N(t)$ as a time-dependent parameter. Denote $p_{total} = \sum_{j=1}^m p_j$ and sum \eq{redp} over all $j$ to obtain
\begin{equation}
\partial_t p_{total} = (1-p_{total}) \left(  \sum_{i=1}^{m} D_i \partial_{xx}  p_i    -  \sum_{i=1}^m M_i \partial_x p_i  + \frac{1}{N}\sum_{i=1}^m f_i (\vec{p}N)
\right)\, .
\label{redeq}
\end{equation}
Hence if $p_{total} = 1$ initially for $t = 0$, the sum of $p_j$ remains constant for all $t$. 
Moreover, it is easy to see that the flow \eq{redp} under the assumption $f_j(\vec{p}N) = 0$ for $p_j = 0$ preserves nonnegativity of all $p_j$, and thus we will refer to  $p_j$ as  frequencies. 

Next we consider a special case $m=2$. Denote $p = p_1$, then set $p_2 = 1-p$ by assuming $p_1 + p_2 = 1$ initially. Then the system \eq{redp} reduces to a single equation
\begin{equation}
\partial_t p = D(p) \partial_{xx} p  - M(p) \partial_x p + f(p,N) \, ,
\label{gFKPP}
\end{equation}
where 
\begin{gather*}
D(p) =  (1-p)D_1 + p D_2, \qquad
M(p) = (1-p)M_1 + pM_2, \\
f(p,N) =  (1-p)\frac{f_1}N - p\,\frac{f_2}{N}\, .
\end{gather*}

In the field of evolutionary genetics it is traditional to consider the linear growth rate of all genotypes but for illustrative purposes we also discuss here some alternative choices of growth functions $f_1$ and $f_2$. 

First, we assume that 
$$
f_1 = r_1n_1 = r_1p N, \qquad 
f_2 = r_2 n_2 = r_2 (1-p)N\, .
$$
Then
$$
f(p) = \frac{1}{N}  \left((1-p)r_1pN - pr_2(1-p) N\right) = (r_1 - r_2) p (1-p)\, .
$$
Therefore we recover the typical quadratic nonlinearity in the FKPP equation. Particularly note that the equation \eq{gFKPP} is  in this case independent of $N$. 
Also note that the nonlinearity originates in the different magnitude of the growth rates of the two genotypes, 
i.e., the equation \eq{gFKPP} can be used to study the effects of different dispersal, mean-displacement and linear growth 
rates on the frequencies of individual genotypes, although some caution is needed as the system \eq{redp} was derived from \eq{eqn1}
using the assumption on spatial homogeneity of $N$ that may not be, in general, completely satisfied. 

Another interesting case is the independent logistic growth of each genotype
$$
f_i = r_i n_i \left(1 - \frac{n_i}{K_i}\right)\, , \qquad i = 1,2,
$$
where $K_i = K_i(t)$ is the carrying capacity of the genotype $i$. In that case 
\begin{equation}
f(p) = p(1-p) \left[ (r_1 - r_2) - \left(\frac{r_1}{\alpha_1} \, p - \frac{r_2}{\alpha_2} (1-p)\right) \right]\, , 
\label{Fpcub}
\end{equation}
where $\alpha_i = K_i/N$. If $\alpha_1$ and $\alpha_2$ are constant and $\alpha_1, \alpha_2  < 1$, the expression 
on the right-hand side of \eq{Fpcub} can be written as $kp(1-p)(p-p_0)$ with $p_0 \in (0,1)$ and it corresponds to the Allee effect \citep{Murray}. In such a case $k > r_1 + r_2$. 

On the other hand, additive terms in $f_i$ in the form $n_ig(N)$ do not influence $f(p)$ as 
$$
\frac{1}{N} \left[ n_1 g(N) - p (n_1 g(N) + n_2 g(N))\right]  = 0\, .
$$

Finally, if the terms $f_i$ represent a direct competition between the genotypes $f_1 = -f_2 = g(n_1, n_2)$ then
$$
f(p,N) = \frac{1}{N} \left[ g + p(g-g)\right] = \frac{g}{N}\, .
$$
Particularly, if $g(n_1,n_2) = \gamma n_1 n_2$ 
$$
f(p,N) = \gamma p(1-p)N\, ,
$$
and thus in this case the equation \eq{gFKPP} is directly dependent on $N$. 

\vspace{\baselineskip}

\noindent
{\bf Consistency.} Without the assumption on spatial homogeneity of $N$ the equation \eq{fullp} has the form
\begin{eqnarray}
\partial_t p_j & = & 
 D_j \partial_{xx}p_j - M_j \partial_x p_j + \frac{1}{N}f_j (\vec{p}N) 
+ p_j \left(D_j  - \sum_{i=1}^m D_i p_i\right) \frac{\partial_{xx} N}{N} \nonumber \\
& &  - p_j  \left(  \sum_{i=1}^{m} D_i \partial_{xx}  p_i    -  \sum_{i=1}^m M_i \partial_x p_i  + \frac{1}{N}\sum_{i=1}^m f_i (\vec{p}N)
\right)  \nonumber \\
& & 
+ \left( 2D_j \partial_x p_j -  p_j \sum_{i=1}^m 2D_i \partial_x p_i  -  M_jp_j   + p_j \sum_{i=1}^m M_i p_i \right) \frac{\partial_{x} N}{N}\, .
\label{fullexp}
\end{eqnarray}
If the relative spatial variation of the total population $N$ starts and remains relatively small compared to the relative spatial variation of the frequencies $p_j$, 
$$\frac{|\partial_x N|}{N} \ll \frac{|\partial_x p_j|}{p_j}, \qquad
\frac{|\partial_{xx} N|}{N} \ll \frac{|\partial_{xx} p_j|}{p_j}, \quad 
\frac{|\partial_x N|}{N} \ll \max\left\{\frac{|\partial_{xx} p_j|}{|\partial_x p_j|}, \frac{M_j}{D_j} \right\},
$$
the correction terms  in \eq{fullexp} can be neglected compared to the terms in \eq{redp}.

Furthermore, if $D_i = D$ and $M_i = M$ for all $i = 1, \dots, m$, then under the assumption $\sum_{i=1}^{m}  p_i = 1$  for all  $x \in \RR$ initially, the terms $D - \sum_{i=1}^{m} D p_i$ and $-Mp_j + p_j \sum_{i=1}^m Mp_i$ 
in the second line of \eq{fullexp} vanish. Also, $\sum_{i=1}^m 2 D\partial_x p_i = 0$. Therefore   \eq{fullexp} reduces in that case to 
\begin{eqnarray}
\partial_t p_j & = & 
 D \partial_{xx}p_j - M \partial_x p_j + \left[ \frac{f_j (\vec{p}N)}{N}  - 
\frac{p_j}{N} \sum_{i=1}^m f_i (\vec{p}N)\right] + 2D\partial_x p_j \frac{\partial_x N}{N}. \ \ \ \ 
\label{fullexpred}
\end{eqnarray}
On the other hand,  the evolution of $N$ is  governed by the reduced equation \eq{totaleqp}:
\begin{equation}
\partial_t N  =  D \,\partial_{xx}  N   -  M \partial_x N 
+ \sum_{j=1}^m f_j (\vec{p}N)\, .
\label{totaleqpred}
\end{equation}
The coupled system \eq{fullexpred}-- \eq{totaleqpred} then characterizes the dynamics of $n_i = p_iN$ exactly for all $i = 1, \dots, m$.  This system also preserves the sum of $p_j$ equal to one and non-negativity of $p_j$, along with non-negativity of $N$, if $f_j(\vec{p}N) = 0$ for $N=0$.

\section{Dynamical System Reformulation}
\label{s:ds}
Problem \eq{secondorder} can be rewritten as the first order system
\begin{eqnarray}
P' & = & Q, \label{e1}\\
Q' & = & \frac{M(P)-c}{D(P)}\,Q - \frac{f(P)}{D(P)}\, .
\label{e2}
\end{eqnarray}

The fixed points of the two-dimensional dynamical system \eq{e1}--\eq{e2} are given by 
$(P^{\ast},0)$ where $f(P^{\ast})  = 0$, particularly $(0,0)$ and $(1,0)$ are equilibria. The traveling front solutions 
correspond to {\it admissible heteroclinic orbits} of \eq{e1}--\eq{e2} connecting the equilibria $(0,0)$ and $(1,0)$, i.e. solutions 
$(P(\xi),Q(\xi))$ of \eq{e1}--\eq{e2}
satisfying $0 <  P(\xi)  < 1$ for all $\xi$ real and one of the following conditions
\begin{eqnarray}
& &  \lim_{\xi \rightarrow -\infty} (P(\xi),Q(\xi)) =  (1,0) \qquad \mbox{and} \qquad \lim_{\xi \rightarrow \infty}(P(\xi),Q(\xi)) = (0,0),
\label{TypeA} \\
 & & \lim_{\xi \rightarrow -\infty} (P(\xi),Q(\xi)) =  (0,0) \qquad \mbox{and} \qquad \lim_{\xi \rightarrow \infty}(P(\xi),Q(\xi)) = (1,0).
\label{TypeB}
\end{eqnarray}

\noindent
{\bf Symmetries.}
There are two important symmetries of \eq{secondorder}.
The change of variables 
\begin{equation}
\widehat{P} = 1-P, \ \ 
\widehat{f}(P) = - f(1-P), \ \ 
\widehat{M_j} = M_{3-j},\ \ \
\widehat{D_j} = D_{3-j}, \ \ \
j = 1,2,
\label{sym1}
\end{equation}
transforms \eq{secondorder} to the same form with $P$ replaced by $\widehat{P}$ and $f$ by $\widehat{f}$. This transformation switches the heteroclinic orbits from $(0,0)$ to $(1,0)$ to orbits from $(1,0)$ to $(0,0)$ and vice-versa with the same velocity $c$ and $Q$ replaced by $-Q$. 
The equation \eq{secondorder} is also invariant with respect to the change of variables
\begin{equation}
\widehat{c} = - c, \qquad 
\widehat{\xi} = -\hat{\xi}, \qquad 
\widehat{M_1} = -M_1, \qquad 
\widehat{M_2} = -M_2.
\label{sym2}
\end{equation} 
This transformation changes both $Q$ to $-Q$ and $c$ to $-c$. 

\noindent
{\bf Linearization.}
The linearized flow of \eq{e1}--\eq{e2} at an equilibrium $(P,Q) = (P^{\ast},0)$ is given by the linear system 
$y' = Ay$, where $y = (P,Q)^T$, 
$$
A = A(P^{\ast}) = \left(
\begin{matrix}
0 & 1 \\
-\alpha & \beta 
\end{matrix}
\right),
$$
and 
$$
\alpha = \alpha(P^{\ast}) = \frac{f'(P^{\ast})}{D(P^{\ast})}, \quad
\beta = \beta(P^{\ast}) = \frac{M(P^{\ast})-c}{D(P^{\ast})}\, .
$$
The eigenvalues $\lambda^{\pm}_{P^{\ast}}$, $\Re \la^+_{P^{\ast}} \ge \Re \la^-_{P^{\ast}}$, are the roots of the 
characteristic quadratic equation
\begin{equation}
\lambda^2 - \beta \la + \alpha = 0\, . 
\label{quadratic}
\end{equation}
Therefore
\begin{eqnarray}
\lambda^{\pm}_{P^{\ast}} & =&  \frac{1}{2D(P^{\ast})} \left( M(P^{\ast}) - c \pm \sqrt{(M(P^{\ast}) - c )^2 - 4f'(P^{\ast})D(P^{\ast})}\right)
\, . 
\label{quadroot}
\end{eqnarray}
The eigenvectors corresponding to the eigenvalues $\la^{\pm}_{P^{\ast}}$ can be selected as  $(1,\la^{\pm}_{P^{\ast}})$.
 
\begin{figure}[t] 
\centering
\includegraphics[width=0.65\textwidth]{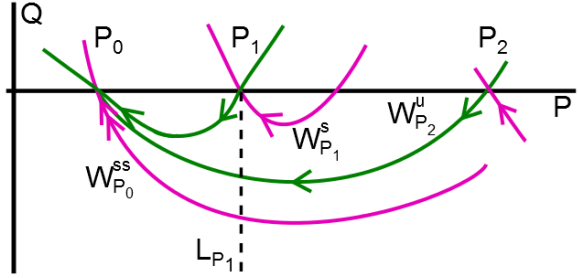}
\caption{Illustration of our notation and terminology. The points $P_0, P_1$ and $P_2$ are equilibria of the system, $P_1$ and $P_2$ are saddle points, $P_0$ is a stable node. $W^u_{P_2}$ is the unstable manifold of the saddle $P_2$, $W^s_{P_1}$ is the stable manifold of $P_1$, $W^{ss}_{P_0}$ is the fast stable manifold of $P_0$. All three approach the corresponding equilibria from the lower half plane $Q <0$. The (dashed) half-line $L_{P_1}$ originates at $(P_1, 0)$. The manifold $W^{ss}_{P_0}$ lies under $W_{P_2}^u$ and they both lie under $W_{P_1}^s$. The manifold $W^u_{P_2}$ overshoots $P_1$ while it does not reach equilibrium at $P=0$. 
\label{Fig:not}}
\end{figure}

\noindent
{\bf Notation.}
We will use the following notation and terminology, see Fig.~\ref{Fig:not}.
\begin{itemize}[leftmargin=\parindent]
\item
Since all the equilibria of the system \eq{e1}--\eq{e2} have the form $(P^{\ast},0)$, we will refer to it simply as $P^{\ast}$. 
\item
Due to the symmetries \eq{sym1} and \eq{sym2}  we can restrict our analysis without loss of generality to traveling waves satisfying \eq{TypeA}. Thus we can restrict our proofs solely to the lower half-plane of the phase plane $(P,Q)$.  
\item 
If $P^{\ast}$ is a saddle point of \eq{e1}--\eq{e2} we denote by $W^u_{P^{\ast}}$ the part of the unstable manifold of $P^{\ast}$ parametrized as $(P(\xi), Q(\xi))$ for which $P \rightarrow (P^{\ast})^-$ as $\xi \rightarrow -\infty$ (and $Q(\xi) \rightarrow 0^-$). Analogously we denote by $W^s_{P^{\ast}}$ the part of the stable manifold of $P^{\ast}$ for which $P \rightarrow (P^{\ast})^+$ as $\xi\rightarrow \infty$.
\item
If $P^{\ast}$ is a node such that the linearization of \eq{e1}--\eq{e2} at $(P,Q) = (P^{\ast}, 0)$ has two negative real eigenvalues, then we denote by $W^{ss}_{P^{\ast}}$ the part of the unique fast stable manifold for which $P \rightarrow (P^{\ast})^+$ as $\xi \rightarrow \infty$ and $P/Q \rightarrow \la_{P^{\ast}}^-$ as $\xi \rightarrow \infty$. 
\item
We denote by $L_{P^{\ast}}$ the half-line $\{(P,Q); P = P^{\ast}, Q < 0\}$. 
\item
We say that $W^u_{P_2}$ does not reach the equilibrium $P_1$, $P_1 < P_2$, if 
$W^u_{P_2}$ either does not intersect $L_{P_1}$ or if it intersects the segment $\{(P,Q); P \in (P_1, P_2), Q = 0\}$, for smaller value of $\xi$ than its first intersection with $L_{P_1}$.  
\item
On the other hand, we say that $W^u_{P_2}$ overshoots the equilibrium $P_1$, $P_1 < P_2$, if 
$W^u_{P_2} = (P(\xi), Q(\xi))$ intersects $L_{P_1}$ at finite $\xi_0$ and for all $\xi < \xi_0$ it holds $P(\xi) \in (P_1, P_2)$, $Q(\xi) < 0$. 
\item
We say that the invariant orbit $(P(\xi), Q(\xi))$ satisfying $Q = Q (P)$ lies under the invariant orbit  $(\widehat{P}(\xi), \widehat{Q}(\xi))$ satisfying $\widehat{Q} = \widehat{Q}(\widehat{P})$ if $Q(P) < \widehat{Q}(P)$ for all $P$ for which both $Q(P)$ and $\widehat{Q}(P)$ are defined. Note that \eq{e1} implies that this notation is well defined. 
\item
Finally, let $0 = P_1 < \dots < P_n = 1$ are all the equilibria of \eq{e1}--\eq{e2} in $[0,1]$. We say that $P_{\ell}$ is the last connected saddle to the saddle $P_m$ before $P_{k}$, $1 \le k < \ell < m \le n$, if $P_{\ell}$ and $P_m$ are saddle points, there exist a heteroclinic orbit from $P_m$ to $P_{\ell}$ for some $c$ but no heteroclinic orbit exists from $P_m$ to $P_r$ for any $c$ for all 
$r$, $k < r < \ell$. Note that $P_k$ in this definition can be a saddle or a node. 
\end{itemize}

\section{Existence of Traveling Waves}
\label{s:existence}
In this section we discuss existence of admissible heteroclinic orbits with $P \in [0,1]$ connecting $(0,0)$ and $(1,0)$ satisying \eq{TypeA} or \eq{TypeB} for a general class of nonlinearities $f(P)$ satisfying the conditions
\begin{itemize}[leftmargin=2\parindent]
\item[(S1)]
$f(P)$ is continuous for $P \in [0,1]$;
\item[(S2)]
$f(0) = f(1) = 0$;
\item[(S3)]
$f(P)$ has a finite number of zeros in $(0,1)$ and it has a non-zero derivative at each of its zeros in $[0,1]$;
\item[(S4)]
$f(P)$ is differentiable for $P \in (0,1)$.
\end{itemize}
The assumption (S4) is only technical and can be removed. The assumption (S3) on non-zero derivative at each zero in $[0,1]$ is often just technical, see \cite{Hou2010} for the treatment of the case of vanishing derivatives of $f(P)$ at its zeros, $f(P) > 0$ for $P \in (0,1)$ in the case of \eq{tFKPP}. Differentiability at zero points in (S3) can be alleviated even further but that requires a significant theoretical overhead as it would not be possible to use the standard results in the theory of dynamical systems, see \cite{Polacik2015} for a different approach that completely avoids this assumption.

\begin{figure}[t] 
\centering
\includegraphics[width=0.33\textwidth]{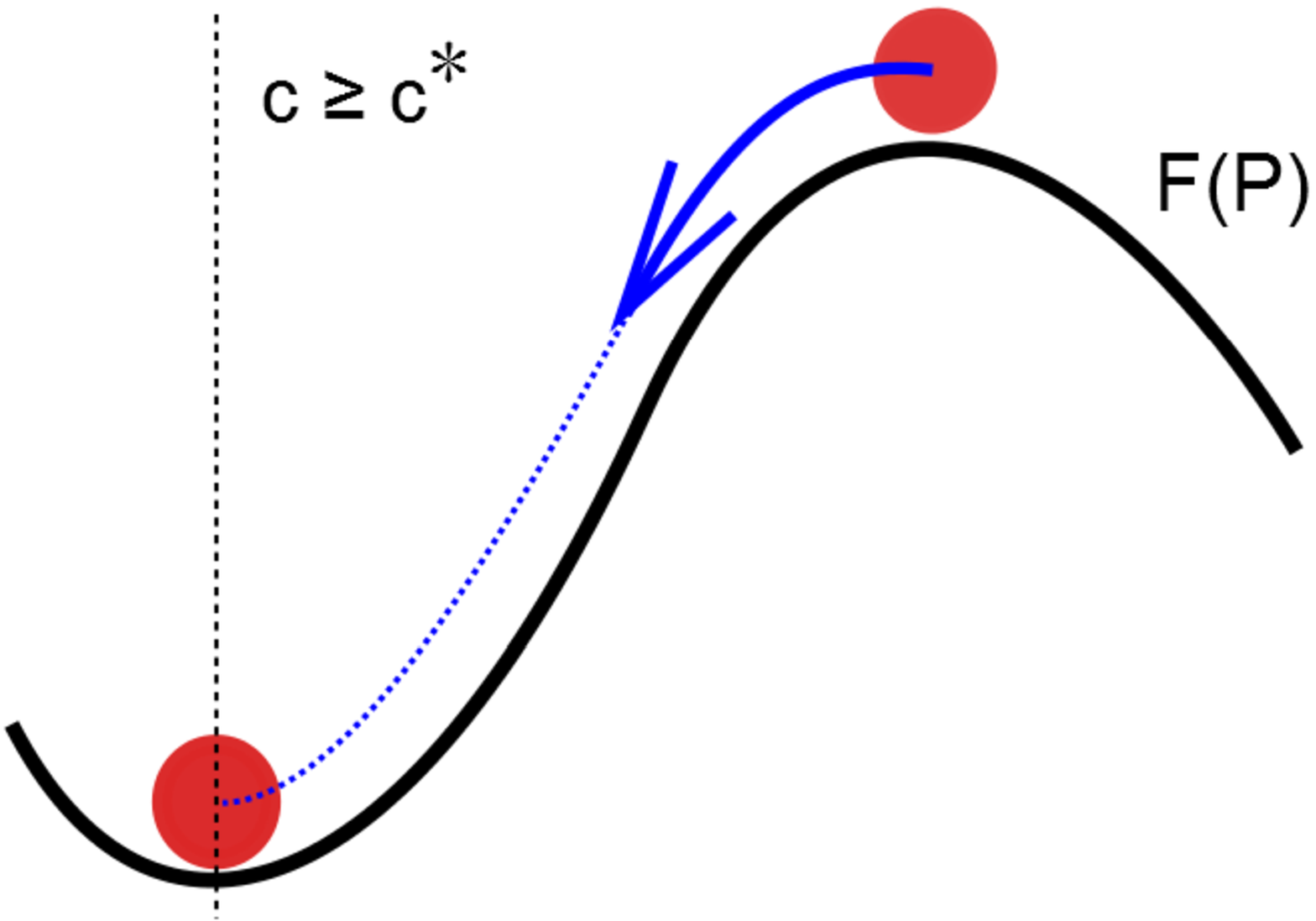}
\qquad\qquad \qquad
\includegraphics[width=0.33\textwidth]{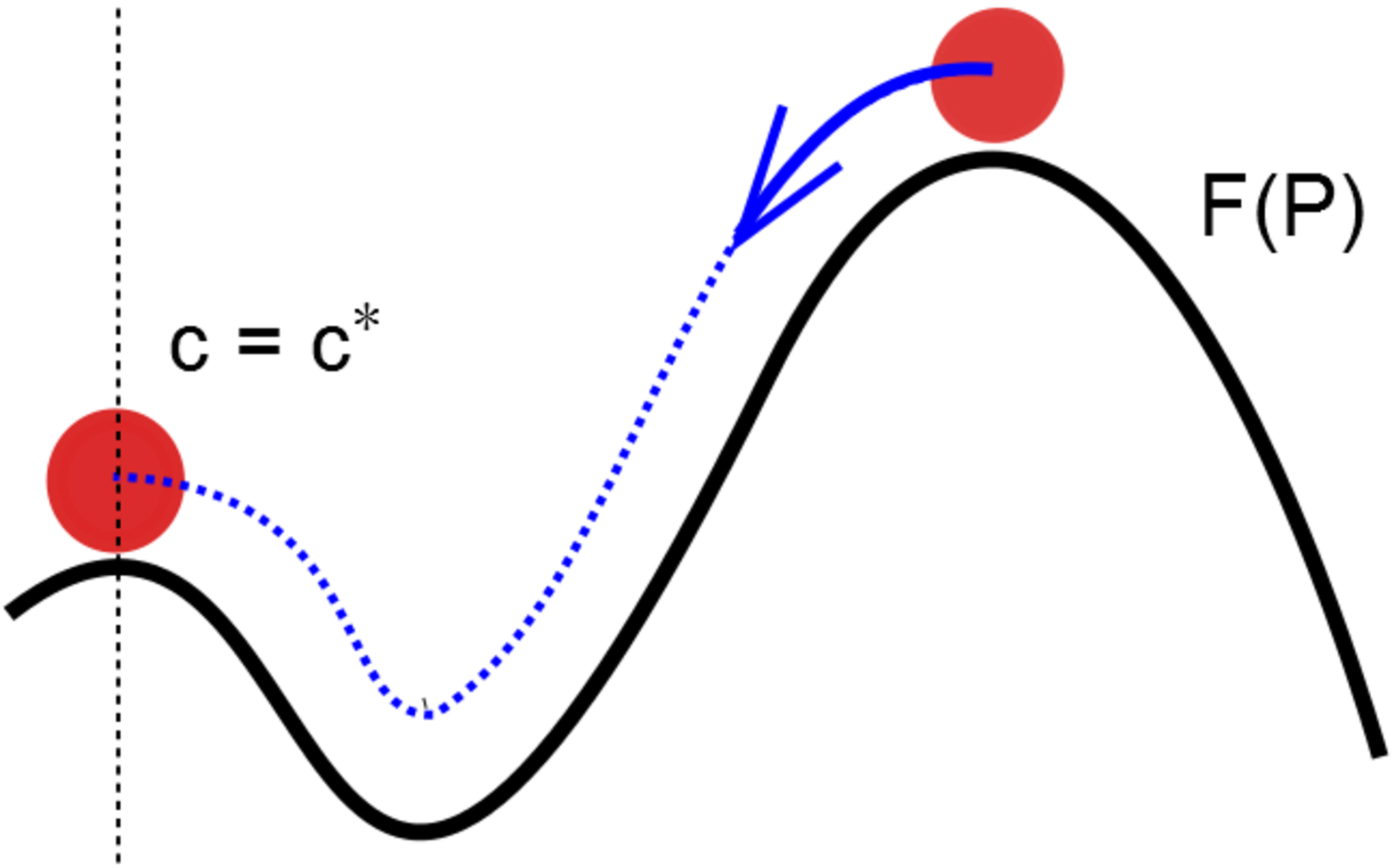}
\caption{Illustration of the mechanical analogue to \eq{tFKPP}. (Left panel) For a quadratic (monostable) $f(P)$ the trajectory from a saddle point to its neighboring node exists if and only if there is enough friction $c \ge c^{\ast}$. Less friction would yield decaying oscillations about the spiral node. (Right panel) For a cubic (bistable) $f(P)$ the trajectory from a saddle point to its neighboring saddle point exists if and only if the friction is equal to the critical friction $c = c^{\ast}$. More friction would yield decaying oscillations about the spiral node in between the saddle points while less friction would lead to overshooting of the saddle point. 
\label{Fig:min}}
\end{figure}

First, as a visual guidance for our results we will discuss the mechanical analogue intuition presented in \cite{Hadeler1987} (see also  \cite{Conley1978} and \cite{Smoller1982}) to determine the ranges of traveling wave speeds for which the traveling wave exists for the FKPP equation \eq{tFKPP}. The equation \eq{secondorder} for $M(P) \equiv 0$ and $D(P) \equiv 1$ can be interpreted as an equation for the position of a mass point on a surface (curve) of the potential energy $F(P) = \int f(P)\, dp$ with a (signed) friction with the magnitude $c$. Note that negative values of $c$ correspond to a physically unrealistic negative friction. 
To obtain such an interpretation multiply \eq{secondorder} by $P'(\xi)$ to obtain
$$
\frac{d}{d\xi} \left[ \frac{P'(\xi)^2}{2} +  F(P) \right] 
+ c (P'(\xi))^2 = 0\, .
$$
The traveling wave satisfying \eq{admsol2} can be then  interpreted as a trajectory from the saddle point of $F(P)$ 
at $P = 1$ to the saddle point or extremal point $P=0$, where $\xi$ becomes a time-like variable, although one has to keep in mind that it takes an infinite time to the mass point to get away from the hyperbolic saddle point at its initial position.

If $f(P) = k P(1-P)$ then the critical point at $P = 0$ is a minimum of $F(P)$ (see Fig.~\ref{Fig:min}, left panel). If the friction is too small $c \ll 0$, the trajectory of the mass point (starting at $P = 1$) will overshoot the equilibria at $P = 0$. For $c$ close to the critical speed $c^{\ast}$ the mass point will make decaying oscillations around $P = 0$ as $\xi \rightarrow \infty$. On the other hand, for supercritical  friction, $c \ge c^{\ast}$, the mass point will reach $P = 0$ as $\xi\rightarrow \infty$. Therefore, for the quadratic nonlinearity the traveling wave satisfying \eq{admsol2} exists if and only if $c \ge c^{\ast}$ for some critical wave speed $c^{\ast}$.

Analogously one can consider the case of a cubic nonlinearity $f(P) = k P (1-P) (P- P_0)$. In that case the trajectory should connect  the point of local maximum of the potential $F (P)$ at $P = 1$ with the neighboring local maximum at $P = 0$  (see Fig.~\ref{Fig:min}, right panel). Once again, if the friction is too small (the friction is very negative, i.e. the anti-friction is too large) $c \ll 0$ the trajectory will overshoot $P = 0$. On the other hand, for friction equal or larger than the critical value $c \ge c_{P_0}^{\ast}$ the trajectory will get trapped in the point of local minimum of $F(P)$ at $P = P_0$. If one lowers the value of friction below $c^{\ast}_{P_0}$ the trajectory will overshoot $P = P_0$. Smaller and smaller friction will extend the first oscillation of the trajectory further below $P = P_0$. By the continuity at some particular value $c = c^{\ast}$ the trajectory reaches $P = 0$ at infinite time. Since $P = 0$ is a point of local maximum of the potential, for any friction $c < c^{\ast}$ the trajectory of the mass point will overshoot $P = 0$ and the trajectory will go to $P = -\infty$ as $\xi \rightarrow \infty$. Therefore for the cubic nonlinearity $f(P) = k P (1-P) (P- P_0)$ the traveling wave of the type \eq{admsol2} exists only for  the wave speed $c = c^{\ast}$. 

\begin{figure}[t] 
\centering
\includegraphics[width=0.6\textwidth]{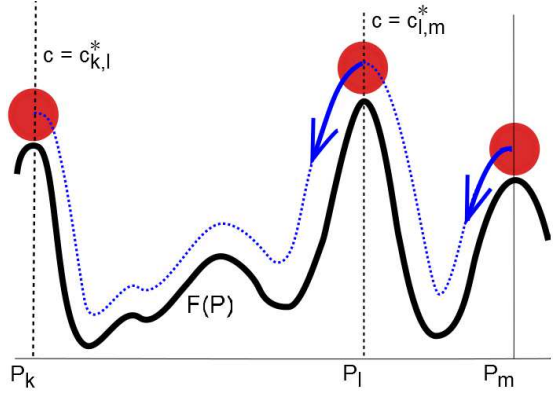}
\caption{Illustration of the mechanical analogue to \eq{tFKPP}. A saddle--saddle connection for $f(P)$ with more than two roots in between $P_m$ and $P_k$. While the saddle point $P_{\ell}$ is accessible from $P_m$ for $c = c^{\ast}_{\ell,m}$, the saddle points between $P_l$ and $P_k$ are not accessible from $P_{\ell}$. The connection from $P_{\ell}$ to $P_k$ exists for $c = c^{\ast}_{k,\ell}$. Then the connection from $P_m$ to $P_k$ then exists if and only if $c^{\ast}_{k,\ell} < c^{\ast}_{\ell,m}$ for 
$c =c^{\ast}_{k,m} \in (c^{\ast}_{k,\ell}, c^{\ast}_{\ell,m})$.
\label{Fig:stos}}
\end{figure}

Existence or nonexistence of the traveling waves for general nonlinearities $f(P)$ can be deduced by local analysis of trajectories from the point of local maximum of $F(P)$ to its neighboring local minimum and local maximum. We denote
$0 = P_1 < P_2 < \dots < P_n = 0$ the local extrema of $F(P)$ on $[0,1]$  (the zero points of $f(P)$),  where the local maxima (saddle points of the potential $F(p)$) are at $P_n$, $P_{n-2}, \dots$, and the local minima (nodes) at $P_{n-1}$, $P_{n-3}, \dots$ (see Fig.~\ref{Fig:stos}) 
The argument above guarantees that for any $k \ge 1$ the trajectory originating as $\xi \rightarrow -\infty$ at the saddle $P_{k+1}$ converging to the node $P_{k}$, $1 \le k < n$, as $\xi \rightarrow \infty$ exists if and only if $c \ge c^{\ast}_{k,k+1}$, and the trajectory originating as $\xi \rightarrow -\infty$ at the saddle $P_{k+1}$ converging to  the saddle $P_{k-1}$, $1 < k < n$, as $\xi \rightarrow \infty$ exists if and only if $c = c^{\ast}_{k-1,k+1}$.  Note that $c^{\ast}_{k-1,k+1} < c^{\ast}_{k,k+1}$ for all admissible $k$. 
Hence, it is easy to see (see \cite{Hadeler1987} for details) that there is a connection of the saddle point $P_m$ to the saddle point $P_{k}$, $1 \le k < m \le n$, if and only if the following recursive conditions are met:
\begin{itemize}[leftmargin=2\parindent]
\item[(R1)]
the connection of the saddle $P_m$ to the saddle $P_{\ell}$ exists for some $k < \ell  < m$ and  $c =  c^{\ast}_{\ell,m}$,
\item[(R2)]
a connection of the saddle $P_m$ to the saddle $P_s$ does not exist for any $s$, $k < s  < {\ell}$, for any $c$,
\item[(R3)]
the connection of the saddle $P_{\ell}$ to the saddle $P_{k}$ exists for $c = c^{\ast}_{k,\ell}$ and $c^{\ast}_{k,\ell} < c^{\ast}_{\ell,m}$.
\end{itemize}
Then the connection of $P_m$ to $P_k$ exists for $c = c^{\ast}_{k,m} \in (c^{\ast}_{k,\ell},c^{\ast}_{\ell,m})$.

Note that for $\ell = m - 2$ the first part of the condition is always met, i.e.,  for $k <  m-2$ there always exists $\ell$ satisfying (R1) and (R2). If $k = m-2$ the condition is empty and the traveling wave always exists for $c = c^{\ast}_{m-2,m}$. The situation is analogous when considering the connection of the saddle point $P_m$ to the stable node $P_k$ of $F(P)$.
One just needs to replace (R3) by  
\begin{itemize}[leftmargin=2\parindent]
\item[(R3')]  the connection  of the saddle $P_{\ell}$ to the stable node $P_{k}$ exists for 
all $c \in [c^{\ast}_{k, \ell},  c^{\ast}_{\ell,m})$. 
\end{itemize}
Then the connection of $P_m$ to $P_k$ exists for all $c \in [c^{\ast}_{k,m}, c^{\ast}_{\ell,m})$ where 
$c^{\ast}_{k,m} \in (c^{\ast}_{k, \ell},  c^{\ast}_{\ell,m})$.

\begin{figure}[t]
\centering
\includegraphics[width=\textwidth]{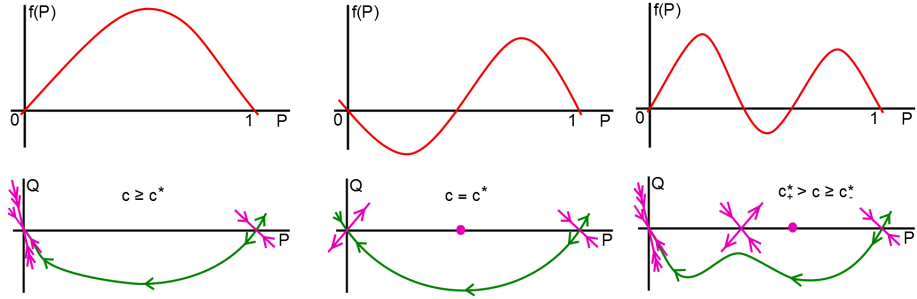}
\caption{Illustration of the results of Theorem~\ref{th:existence}. The upper plots show $F(P)$ vs.~$P$, the lower the existing connecting orbit and the range of $c$ for which it exists.}
        \label{fig:existence}
\end{figure}

We distinguish  six different cases of $f(P)$.  The results of Theorem~\ref{th:existence} are illustrated in Fig.~\ref{fig:existence}. 

\begin{theorem}
Let $f(P)$ satisfies the assumptions  (S1)--(S4), and let $D_1, D_2 > 0$.
Then an admissible heteroclinic orbit for the flow \eq{e1}--\eq{e2} exists if and only if $c$ satisfies:
\begin{itemize}[leftmargin=2\parindent]
\item[(A1)] 
If  $f(P) >0$ for $P \in (0,1)$ then $c \in [c^{\ast}, \infty)$  (orbits satisfying \eq{TypeA}), 
and $c \in (-\infty, c^{\ast\dagger}]$   (orbits satisfying \eq{TypeB}).
\item[(A2)] 
If $f(P) <0$ for $P \in (0,1)$ then  $c \in (-\infty,c^{\ast}]$ (orbits satisfying \eq{TypeA}), 
and $c \in [c^{\ast\dagger},\infty)$  (orbits satisfying \eq{TypeB}).
\item[(B)]
If $f'(0) f'(1)>0$ and if there is exactly one point $P_1 \in (0,1)$ such that $f(P_1) = 0$
then $c = c^{\ast}$ (orbits satisfying \eq{TypeA}), 
and  $c = c^{\ast\dagger}$ (orbits satisfying \eq{TypeB}). 
\item[(C1)]
If $f'(0) > 0$ and $f'(1)< 0$ and there are at least two distinct roots of $f(P)=0$ in $(0,1)$ then the set of values of $c$  is either a bounded interval $[c^{\ast}_-, c^{\ast}_+)$ or the empty set (orbits satisfying \eq{TypeA}), and it is a bounded interval  $(c^{\ast\dagger}_{-}, c^{\ast\dagger}_{+}]$ or the empty set (orbits satisfying \eq{TypeB}). 
\item[(C2)]
If $f'(0) < 0$ and $f'(1)> 0$ and there are at least two distinct roots of $f(P)=0$ in $(0,1)$ then the set of values of $c$  is either a bounded interval $(c^{\ast}_-, c^{\ast}_+]$ or the empty set (orbits satisfying \eq{TypeA}), and it is a bounded interval  $[c^{\ast\dagger}_{-}, c^{\ast\dagger}_{+})$ or the empty set (orbits satisfying \eq{TypeB}). 
\item[(D)]
If $f'(0)f'(1) > 0$ and there are at least three distinct roots of $f(P) = 0$ in $(0,1)$ either the orbit does not exists or it exists for $c = c^{\ast}$ for some $c^{\ast} \in \RR$ (orbits satisfying \eq{TypeA}), and it either does not exists or it exists for $c = c^{\ast\dagger}$ for some $c^{\ast\dagger} \in \RR$ (orbits satisfying \eq{TypeB}).

\end{itemize}
Furthermore, all these admissible heteroclinic orbits are monotone, i.e. 
$$
\mbox{either} \quad Q(z) = P'(z) \ge 0 \ \ \mbox{for all $z \in \RR$} \quad \mbox{or} \quad Q(z) = P'(z) \le 0\ \  \mbox{for all $z \in \RR$.}
$$ 
\label{th:existence}
\end{theorem}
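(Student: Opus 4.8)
The plan is to analyze the planar system \eq{e1}--\eq{e2} entirely in the lower half-plane $Q\le 0$, using the symmetries to reduce the six cases to two elementary ``building-block'' connections. By \eq{sym1} every Type~B orbit is the $\widehat P=1-P$ image of a Type~A orbit, and by \eq{sym2} the reflection $c\mapsto -c$ exchanges the two half-planes; together these let me treat only Type~A orbits in $Q\le 0$ and read off all Type~B statements (and cases A2, C2) by relabeling. Next I would classify the equilibria. At a zero $P^\ast$ of $f$ the quantity $\alpha=f'(P^\ast)/D(P^\ast)$ is nonzero by (S3) and $D>0$; if $f'(P^\ast)<0$ then $\alpha<0$ and \eq{quadroot} gives one positive and one negative real eigenvalue, so $P^\ast$ is a saddle for \emph{every} $c$, while if $f'(P^\ast)>0$ the equilibrium is a node or focus whose stability is fixed by the sign of $\beta=(M(P^\ast)-c)/D(P^\ast)$. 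Thus the saddles of \eq{e1}--\eq{e2} are exactly the local maxima of $F$ and the nodes its local minima, justifying the alternating structure $0=P_1<\dots<P_n=1$.

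The technical core is a monotone-dependence lemma. Writing an orbit as a graph $Q=Q(P)$, one has $dQ/dP=(M(P)-c)/D(P)-f(P)/(D(P)Q)$. If $Q_1,Q_2$ are graphs of two orbits leaving the same saddle with $c_1<c_2$, then at any common point $(P_0,Q_0)$ with $Q_0<0$ the $f$-terms cancel and $dQ_1/dP-dQ_2/dP=(c_2-c_1)/D(P_0)>0$; a first-contact argument then shows the graphs never meet, so the family $W^u_{P^\ast}(c)$ is monotone in $c$, with larger $c$ keeping $W^u$ nearer the $P$-axis. From this the two building blocks follow. (i) Saddle to adjacent node $P_k<P_{k+1}$: for large $c$ the manifold $W^u_{P_{k+1}}$ descends gently and lands on the node, for small $c$ it plunges and overshoots, so monotonicity and continuity give a threshold $c^\ast_{k,k+1}$ with a connection precisely for $c\ge c^\ast_{k,k+1}$. (ii) Saddle to next saddle $P_{k-1}<P_{k+1}$ over the intervening node: the target is a saddle, so its stable manifold $W^s_{P_{k-1}}$ is a single curve that $W^u_{P_{k+1}}$ meets for exactly one $c=c^\ast_{k-1,k+1}$; the same overshoot/undershoot dichotomy gives uniqueness, and $c^\ast_{k-1,k+1}<c^\ast_{k,k+1}$ because reaching the farther saddle requires overshooting the node $P_k$, which happens only for strictly smaller friction.

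Cases A1 and B are then immediate: $f>0$ on $(0,1)$ forces a saddle at $1$ and a node at $0$, a single saddle-to-node connection with $c\in[c^\ast,\infty)$, while $f'(0)f'(1)>0$ with one interior root forces saddle-node-saddle and hence a unique $c=c^\ast$. For the multiple-root cases C1, C2, D I would glue connections recursively exactly as in (R1)--(R3) and (R3'): a saddle $P_m$ reaches a saddle $P_k$ only through a last intermediate reachable saddle $P_\ell$ with $c^\ast_{k,\ell}<c^\ast_{\ell,m}$, in which case the composite orbit occupies the bounded interval sandwiched between the two critical speeds; if the descent terminates at a node the upper endpoint becomes half-open, producing the $[\cdot,\cdot)$ versus $(\cdot,\cdot]$ patterns, and emptiness (or a single admissible $c$ in case D) arises exactly when the required ordering of critical speeds fails. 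Finally, the monotonicity conclusion is structural: any admissible orbit must coincide with the (unique) invariant manifolds just used, which lie in $Q\le 0$; and if $Q$ vanished at an interior $\bar P$ then $f(\bar P)\ne 0$ by (S2)--(S3), so $Q'=-f(\bar P)/D(\bar P)\ne 0$ gives a transversal crossing of the $P$-axis, incompatible with confinement to the strip and approach to the hyperbolic endpoints along eigendirections in $Q<0$. Hence $Q$ keeps one sign and $P$ is monotone.

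I expect the monotone-dependence lemma to be the main obstacle. Because $D(P)$ and $M(P)$ are non-constant, the energy identity underlying Hadeler's mechanical analogue for \eq{tFKPP} is unavailable, so the comparison of $W^u_{P^\ast}$ across different $c$, and especially the strict ordering $c^\ast_{k-1,k+1}<c^\ast_{k,k+1}$, must be obtained directly from the sign of $\partial_c(dQ/dP)$ in the phase plane, with care taken near the equilibria where $Q\to 0$ and the graph representation $Q=Q(P)$ degenerates.
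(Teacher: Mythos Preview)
Your proposal is correct and follows essentially the same route as the paper: reduce by the symmetries \eq{sym1}--\eq{sym2} to Type~A orbits in $Q\le 0$, establish the two building blocks (saddle--node on half-line of $c$, saddle--saddle at a unique $c$), and then glue recursively via the conditions (R1)--(R3), (R3$'$) using the ``last connected saddle'' device; the monotone dependence on $c$ you derive from the sign of $\partial_c(dQ/dP)=1/D(P)$ at a common point is exactly the comparison argument the paper uses throughout Lemmae~1--4.

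One remark on where the work actually lies. You flag the monotone-dependence lemma as the main obstacle, but in fact your computation already settles it: at any common point $(P_0,Q_0)$ with $Q_0<0$ the difference of slopes is $(c_2-c_1)/D(P_0)>0$, and this suffices (the degeneracy near equilibria is handled by comparing the eigenvalues $\la^\pm$ directly, which you can read off from \eq{quadroot}). The step you pass over more quickly---``for large $c$ the manifold $W^u_{P_{k+1}}$ descends gently and lands on the node''---is where the paper invests explicit effort: it builds a forward-invariant region $\mathcal R$ bounded above by $Q=0$ and below by $Q=-f(P)/D(P)$, checks that the flow points inward on $\partial\mathcal R$ for $c$ satisfying an explicit bound \eq{ccond}, and verifies that $W^u_{P_{k+1}}$ enters $\mathcal R$ at the saddle. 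This trapping argument replaces the energy (mechanical-analogue) estimate that is unavailable when $D(P),M(P)$ vary, so you should expect to supply it rather than the comparison lemma.
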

 Note that in various cases it is possible to further specify lower or upper bounds for the critical speeds, see the proof for details. 
Theorem~\ref{th:existence} immediately follows from four lemmae that translate the intuition gained from the mechanical analogue described above.

\begin{lemma}
Assume that $f(P)$ satisfies assumptions (S1)--(S4) and that for $0 \le P_1 < P_2 \le 1$ 
$$
f(P_1) = 0 = f(P_2) \qquad
\mbox{and} \qquad
f(P) > 0 \quad\mbox{for all $P \in (P_1, P_2)$.}
$$
Then an heteroclinic orbit $(P(\xi), Q(\xi))$ for the flow \eq{e1}--\eq{e2} satisfying 
\begin{equation}
\lim_{\xi \rightarrow -\infty} (P(\xi),Q(\xi)) =  (P_2,0), \qquad \qquad \lim_{\xi \rightarrow \infty}(P(\xi),Q(\xi)) = (P_1,0),
\label{TypeC}
\end{equation}
 and  $P_1< P(\xi) < P_2$  for all $\xi \in \RR$ exists if and only if 
$c \ge c^{\ast}$ for some $c^{\ast} \in \RR$. 
\label{Lemma1}
\end{lemma}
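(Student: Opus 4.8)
The plan is to track the unstable manifold $W^u_{P_2}$ of the upper equilibrium along the flow \eq{e1}--\eq{e2} and to decide, as a function of $c$, whether it reaches the lower equilibrium. The sign hypotheses on $f$ fix the local pictures: since $f>0$ on $(P_1,P_2)$ and $f$ has nonzero (one-sided, if at an endpoint) derivatives at its zeros by (S3), we have $f'(P_2)<0$ and $f'(P_1)>0$. By \eq{quadroot} the product of the eigenvalues at $P_2$ is $f'(P_2)/D(P_2)<0$, so $P_2$ is a saddle for every $c$, and by the conventions of Section~\ref{s:ds} its branch $W^u_{P_2}$ enters the lower half-plane with $P$ decreasing. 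At $P_1$ the eigenvalue product is $f'(P_1)/D(P_1)>0$, so $(P_1,0)$ is a node or a focus; it is an attracting node precisely when $c\ge c^{\ast}_1:= M(P_1)+2\sqrt{f'(P_1)D(P_1)}$.

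The first key step is a dichotomy for the forward orbit of $W^u_{P_2}$ inside the strip $\{P_1<P<P_2,\ Q<0\}$. On the segment $\{Q=0,\ P_1<P<P_2\}$ the field satisfies $P'=0$ and $Q'=-f(P)/D(P)<0$, so an orbit in $Q<0$ can never cross the $P$-axis upward at an interior point. Consequently $W^u_{P_2}$ stays in $Q<0$ with $P$ strictly decreasing until one of two things happens: either it meets $L_{P_1}$ at finite $\xi$ with $Q<0$ (it \emph{overshoots}), or $P(\xi)$ decreases monotonically to a limit $P_\infty\ge P_1$, in which case $Q\to 0^-$ forces $f(P_\infty)=0$, hence $P_\infty=P_1$ and the orbit \emph{connects} to $(P_1,0)$. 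In particular, when $(P_1,0)$ is a focus (for $M(P_1)<c<c^{\ast}_1$) or a repeller (for $c\le M(P_1)$), a monotone approach is impossible, so $W^u_{P_2}$ must overshoot; this already shows that a connection forces $c\ge c^{\ast}_1$, giving a finite lower bound for the admissible speeds.

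The second ingredient is monotone dependence on $c$. Writing the orbit in $Q<0$ as a graph $Q=q(P;c)$ solving $dq/dP=(M(P)-c)/D(P)-f(P)/(D(P)q)$, the right-hand side is strictly decreasing in $c$, and the slope $\lambda^+_{P_2}$ with which $W^u_{P_2}$ leaves the saddle is decreasing in $c$ by \eq{quadroot}. Comparing the two solutions seeded just below the saddle and passing to the limit at the endpoints, where $q\to 0$, I would conclude that $q(P;c)$ is strictly increasing in $c$: larger friction lifts the whole orbit. Hence \emph{overshoot} is preserved under decreasing $c$ and \emph{connection} under increasing $c$, so the admissible set is upward closed; combined with the lower bound it is $(c^{\ast},\infty)$ or $[c^{\ast},\infty)$ with $c^{\ast}:=\inf\{c:\ W^u_{P_2}\text{ connects}\}\ge c^{\ast}_1$.

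Two points remain: nonemptiness and closedness at $c^{\ast}$. For nonemptiness I would show a connection exists for all sufficiently large $c$ by constructing a forward-invariant trapping region in $\{P_1\le P\le P_2,\ Q\le 0\}$, bounded above by the $P$-axis and below by a curve through $(P_1,0)$ and $(P_2,0)$ built from the attracting nullcline $Q=f(P)/(M(P)-c)$ (equivalently, the attracting slow manifold of the fast--slow system obtained as $c\to\infty$): on the axis the field points inward, and for $c$ large one checks it also points inward along the lower boundary, trapping $W^u_{P_2}$ and forcing convergence to $(P_1,0)$. This boundary estimate near the node $P_1$, where $f'(P_1)>0$ makes the naive nullcline fail to be invariant, is the main obstacle and the place where the size of $c$ is used. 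Finally, for closedness, overshoot is an open condition since $W^u_{P_2}$ meets $L_{P_1}$ transversally ($Q<0$ there); as every $c>c^{\ast}$ connects, $c=c^{\ast}$ cannot overshoot, and since $c^{\ast}\ge c^{\ast}_1$ makes $(P_1,0)$ a node, the dichotomy forces a connection at $c^{\ast}$. Thus the admissible set is exactly $[c^{\ast},\infty)$, and $Q<0$ along these orbits gives the asserted monotonicity of the profile.
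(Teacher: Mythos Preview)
Your overall strategy matches the paper's: derive the necessary lower bound by linearizing at $P_1$, establish existence for large $c$ via a forward-invariant region in $\{P_1\le P\le P_2,\ Q\le 0\}$, and use monotonicity of $W^u_{P_2}$ in $c$ to conclude the admissible set is an upward-closed half-line. Your explicit dichotomy (overshoot vs.\ connect) and your closedness-at-$c^{\ast}$ argument are in fact more careful than the paper's treatment, which leaves the latter point implicit.

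The one genuine gap is the trapping region. You take the lower boundary to be the $Q'$-nullcline $Q=g(P):=f(P)/(M(P)-c)$ and rightly flag a problem near $P_1$, but you do not resolve it. On the nullcline the flow is purely horizontal, $(P',Q')=(g(P),0)$, so the inward condition $(P',Q')\cdot(g'(P),-1)\le 0$ reduces to $g(P)\,g'(P)\le 0$; since $g<0$ on $(P_1,P_2)$ and $g'(P_1)=f'(P_1)/(M(P_1)-c)<0$, the flow \emph{exits} through the nullcline near $P_1$ for every large $c$. Appealing to the ``attracting slow manifold'' does not close the gap without producing an explicit curve on which the sign can be checked. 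The paper sidesteps this by a different (and not entirely obvious) choice of lower boundary, namely $Q=h(P):=-f(P)/D(P)$, which is independent of $c$. A direct calculation shows that the inward condition along this curve becomes the pointwise bound
\[
c \ \ge\ M(P)+D(P)\Bigl(1+\frac{d}{dP}\,\frac{f(P)}{D(P)}\Bigr),\qquad P\in[P_1,P_2],
\]
which holds uniformly once $c$ exceeds the maximum of the right-hand side over $[P_1,P_2]$; the same inequality at $P=P_2$ also guarantees that $W^u_{P_2}$ enters the region. Evaluated at $P=P_1$ it gives $c\ge M(P_1)+D(P_1)+f'(P_1)\ge M(P_1)+2\sqrt{f'(P_1)D(P_1)}$, so the construction is consistent with your necessary bound. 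This specific boundary curve is the missing ingredient that completes your argument.
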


\begin{proof}
First, using the linear analysis of \eq{e1}--\eq{e2} at its equilibrium $P_1$ we derive a necessary condition for existence of such an orbit. The condition $f'(P_1) > 0$ implies $\alpha_{P_1} > 0$. If $\beta_{P_1} >  0$ then $\Re \la_{P_1}^{\pm} > 0$ and the fixed point $P = P_1$ is a source and the orbit does not exit.
If $\beta_{P_1} \le 0$ the dynamics close to $P_1$ is determined by the sign of $\triangle = \frac{1}{4}(\beta_{P_1}^2 - 4 \alpha_{P_1})$. If $\triangle < 0$ then the equilibrium is a spiral sink and the 
heteroclinic orbit with $P(z) > P_1$  does not exist. If $\triangle \ge 0$, the two eigenvalues are real negative, $\la_{P_1}^- \ge \la_{P_1}^+ < 0$, and the equilibrium is a hyperbolic sink. Therefore the necessary conditions for existence of the heteroclinic orbit with the given properties are
$\beta_{P_1} \le 0$ and $\beta_{P_1}^2 \ge 4 \alpha_{P_1}$, that is equivalent to 
\begin{equation}
c \ge M(P_1) + 2\sqrt{f'(P_1)D(P_1)}\, .
\label{cest}
\end{equation}

At $P = P_2$ it holds $\alpha_{P_2} < 0$ and thus both roots of \eq{quadratic} are real and $\la_{P_2}^+ > 0 > \la_{P_2}^-$, i.e. the equilibrium $P_2$ is a saddle point. Therefore the heteroclinic orbit with the required properties must approach $P_2$ as $z \rightarrow -\infty$ and $P\rightarrow P_2^-$ along the unstable manifold $W^s_{P_2}$, i.e., in the direction of the eigenvector $(1,\la_1^+)$.

Next, we show that for $c \gg 1$ such an heteroclinic orbit exists. 
We construct a forward invariant region $\mathcal R$ in the phase space $(P,Q)$ with respect to the flow \eq{e1}--\eq{e2}. The region is bounded by line $Q =0$ from above and by the curve 
\begin{equation}
Q = h(P) = -f(P)/D(P)
\label{hp}
\end{equation}
 from below. Clearly $h(P)$ intersects $Q = 0$ at $P = P_1$ and $P = P_2$ and $h(P) < 0$ for all $P \in (P_1, P_2)$.  It is easy to see that for $Q = 0$ one has $P' = 0$ and $Q' < 0$, and thus the flow points inwards on the upper boundary of $\mathcal R$. On the other hand, at $(P, h(P))$ the flow points inwards (or tangentially) if and only if
$(P',Q')\cdot (h'(P), -1) \le 0$ since $(h'(P), -1)$ is the outer normal vector of the curve $Q = h(P)$. The condition can be written  after the division by $-h(P) > 0$ as
\begin{equation}
-h'(P) + \frac{M(P) - c}{D(P)} + 1 \le 0\, .
\end{equation}
Therefore if 
\begin{equation}
c \ge \max_{P\in [P_1, P_2]} \left\{ M(P) + D(P)\left( 1 + \frac{d}{dp} \, \frac{f(P)}{D(P)} \right) \right\}
\label{ccond}
\end{equation}
then the region $\mathcal R$ is forward invariant. Moreover, it is easy to check that for $c \gg 1$ the manifold $W^u_{P_2}$ lies locally (close to $P_2$) inside $\mathcal R$ as 
$$
\la^+_{P_2} \le - \frac{d}{dP}\, \left.\frac{f(P)}{D(P)}\right|_{P = P_2} \, .
$$
The last inequality can be rewritten as \footnote{check the end of inequality}
$$
M(P_2) - c + \sqrt{(M(P_2) - c)^2 - 4f'(P_2)D(P_2)} \le -2f'(P_2)\, ,
$$
and it is equivalent for $c > M(P_2)$ to \eq{ccond} with the right-hand side evaluated at $P = P_2$. 
Therefore $W^u_{P_2}$ lies inside $\mathcal R$, and it coincides with the heteroclinic orbit connecting $P_2$ to $P_1$ of the required properties. Thus for any $c$ satisfying both \eq{cest} and \eq{ccond} the traveling wave satisfying \eq{TypeC} exists. 
Note that the condition \eq{ccond} evaluated at $P = P_1$ implies
$$
c \ge M(P_1) + D(P_1) + f'(P_1) \ge M(P_1) + 2\sqrt{f'(P_1)D(P_1)}\, ,
$$
and therefore the condition \eq{ccond} implies \eq{cest}. Hence the traveling wave exists for all $c$ satisfying \eq{ccond}. Furthermore, if 
$f(P)$ is concave $f''(P) < 0$, $P \in [P_1, P_2]$, and if $D(P_2) \ge D(P_1)$  and $M_1 \le M_2$, the maximum in \eq{ccond} is attained at $P = P_1$ and then \eq{ccond} is equivalent to \eq{cest} and then \eq{cest} becomes both the sufficient and the necessary condition for the existence of the traveling wave.

Next, we show that if such a heteroclinic orbit exists for some $\hat{c}$ then it exists for all $c > \hat{c}$. Consider any such $c$. It is enough to notice that $d\la_{P_2}^+ / dc < 0$. Therefore the unstable manifold
$W^u_{P_2}(\hat{c})$ lies for $\xi \rightarrow -\infty$ below $W^u_{P_2}(c)$. But these two manifolds cannot intersect for any $P \in (P_1, P_2)$ as at any such eventual point $(P,Q)$ of intersection it is easy to see that $Q'(c) > Q'(\hat{c})$ and that makes the intersection impossible. Since $W^u_{P_2}(c)$ cannot intersect the line $Q = 0$, and $P' = Q < 0$, it must also converge to the fixed point $(P_1, 0)$ and it forms a heteroclinic orbit.
\end{proof}

\begin{lemma}
Assume that $f(P)$ satisfies assumptions (S1)--(S4) and that for $0 \le P_1 < P_2 < P_3 \le 1$ 
\begin{gather*}
f(P_1) = f(P_2) = f(P_3) = 0, \\
f(P) < 0 \quad\mbox{for all $P \in (P_1, P_2)$,}
\qquad
f(P) > 0 \quad\mbox{for all $P \in (P_2, P_3)$.}
\end{gather*}
Then a heteroclinic orbit $(P(\xi), Q(\xi))$ for the flow \eq{e1}--\eq{e2} satisfying
\begin{equation}
\lim_{\xi \rightarrow -\infty} (P(\xi),Q(\xi)) =  (P_3,0), \qquad \qquad \lim_{\xi \rightarrow \infty}(P(\xi),Q(\xi)) = (P_1,0),
\label{TypeD}
\end{equation}
such that  $P_1< P(\xi) < P_3$  for all $\xi \in \RR$ exists if and only if 
$c = c^{\ast}$. 
\label{Lemma2}
\end{lemma}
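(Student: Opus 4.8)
The plan is to treat the unstable manifold $W^u_{P_3}$ of the saddle $P_3$ as the single object whose fate decides the connection, and to locate the unique $c^*$ by a monotonicity-plus-continuity (shooting) argument in the spirit of Lemma~\ref{Lemma1}. First I would classify the equilibria from the linearization of Section~\ref{s:ds}: since $f$ decreases through $P_1$ and through $P_3$ we have $f'(P_1)<0$ and $f'(P_3)<0$, so $\alpha_{P_1},\alpha_{P_3}<0$ and both are saddles; since $f$ increases through $P_2$ we have $f'(P_2)>0$, so $\alpha_{P_2}>0$ and $P_2$ is an attracting node or spiral whenever $\beta_{P_2}=(M(P_2)-c)/D(P_2)<0$. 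By the sign conventions of Section~\ref{s:ds}, the relevant branch $W^u_{P_3}$ leaves $P_3$ into $\{P<P_3,\ Q<0\}$ along $(1,\la^+_{P_3})$, and an admissible connection exists exactly when $W^u_{P_3}$ limits onto $(P_1,0)$ along $W^s_{P_1}$ from $Q<0$.

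Next I would establish the two extreme regimes that anchor the shooting. For large $c$ the interval $(P_2,P_3)$ carries $f>0$ with $P_3$ a saddle and $P_2$ a node, so Lemma~\ref{Lemma1} applies verbatim and produces a threshold $c^*_{2,3}$ such that for all $c\ge c^*_{2,3}$ the manifold $W^u_{P_3}$ stays in $(P_2,P_3)$, $Q<0$, and converges to $P_2$; in particular it never reaches $P_1$, so the orbit is \emph{trapped}. For $c$ sufficiently negative I would show $W^u_{P_3}$ \emph{overshoots} $P_1$: as $c\to-\infty$ one has $\la^+_{P_3}\to+\infty$, so the orbit leaves $P_3$ almost vertically into large $|Q|$, and a momentum estimate (multiplying \eq{secondorder} by $P'$ and integrating, with the term $c(P')^2$ acting as anti-friction that pumps in energy) shows it crosses $L_{P_1}$ while $Q<0$ before the reaction term can reverse it.

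I would then transfer the monotonicity mechanism of Lemma~\ref{Lemma1} to this setting: because $d\la^+_{P_3}/dc<0$ and, at any hypothetical crossing of $W^u_{P_3}(c)$ and $W^u_{P_3}(\hat c)$ with $c>\hat c$, one computes $Q'(c)>Q'(\hat c)$, the two manifolds are strictly ordered in $c$ and cannot cross on $P\in(P_1,P_3)$. Consequently the set of $c$ for which $W^u_{P_3}$ overshoots $P_1$ and the set for which it is captured by $P_2$ are each intervals; both are open under continuous dependence of the flow on $c$ (overshoot is a transversal crossing of $L_{P_1}$ at finite $\xi$; capture is entry at finite $\xi$ into a fixed neighborhood of $P_2$ contained in its basin) and both are nonempty by the previous paragraph. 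Since the parameter line is connected, there is a unique $c^*$ lying in neither set. At $c=c^*$ the orbit can neither overshoot nor be captured, and as it is confined to the bounded strip $P\in(P_1,P_3)$ with $Q$ bounded it must limit onto an equilibrium; since it cannot reach $(P_1,0)$ in finite time, the only admissible option is convergence to $P_1$ along $W^s_{P_1}$, giving the connection. The orbit stays in $Q<0$ and is therefore monotone, and uniqueness is immediate from the strict ordering in $c$.

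The main obstacle will be the two limiting regimes together with the openness claims. The delicate point is capture in the underdamped range $c^*<c<c^*_{2,3}$, where $P_2$ is a spiral and $W^u_{P_3}$ reaches it only after a first excursion into $(P_1,P_2)$ and an oscillation back; I must show this capture is robust under small changes of $c$ (so that the capture set is genuinely open) and, symmetrically, that overshoot is robust. Once these endpoints and their stability are secured, the strict monotone ordering in $c$ does the remaining work and forces the clean trichotomy capture/connection/overshoot that pins $c^*$ to a single value.
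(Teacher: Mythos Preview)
Your one-sided shooting approach---tracking only $W^u_{P_3}$ and sorting the parameter line into ``overshoot'' versus ``capture by $P_2$''---differs from the paper's two-sided comparison. The paper instead follows \emph{both} $W^u_{P_3}$ and $W^s_{P_1}$ to the common section $L_{P_2}$: for $c$ in the open interval $(-c^*_{12}, c^*_{23})$ (the left endpoint coming from Lemma~\ref{Lemma1} applied to the pair $(P_1,P_2)$ via the symmetries \eq{sym1}--\eq{sym2}), each manifold is forced to cross $L_{P_2}$ by the sign of $Q'$ on $\{Q=0\}$, so the first intercept $Q_3(c)$ of $W^u_{P_3}$ and the last intercept $Q_1(c)$ of $W^s_{P_1}$ are well defined and continuous in $c$; the difference $w(c)=Q_3(c)-Q_1(c)$ changes sign between the endpoints, and the intermediate value theorem yields $c^*$ with $w(c^*)=0$, hence a connection. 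Uniqueness is then the same crossing-at-both-ends contradiction you sketch.

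The genuine gap in your route is the claimed trichotomy. You assert that at the boundary value $c^*$ the orbit, being neither overshooting nor captured, ``must limit onto an equilibrium'' and hence onto $P_1$; but Poincar\'e--Bendixson also permits a periodic orbit about $P_2$ or a homoclinic to $P_3$, and you offer no Dulac-type argument to exclude them (the divergence $(M(P)-c)/D(P)$ is not of one sign in general). Relatedly, your monotone ordering of $W^u_{P_3}(c)$ in $c$ holds only while the manifolds remain graphs in the lower half-plane; once a spiral capture crosses $Q=0$ you lose that ordering, and with it the claim that the capture set is an interval. And for $c<M(P_2)$ the point $P_2$ is a \emph{source}, so ``entry into the basin of $P_2$'' is vacuous there. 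The paper's device of intersecting both invariant manifolds with the fixed section $L_{P_2}$ sidesteps all of this: it never needs the full $\omega$-limit of $W^u_{P_3}$, only the location of its first crossing of $L_{P_2}$, which is governed entirely by the lower-half-plane comparison you already have.
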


\begin{proof}
Since $f'(P_1) < 0$ and $f'(P_3) <0$ the eventual heteroclinic orbit is a saddle-saddle connection. 
First, we use the result of Lemma~\ref{Lemma1}. For $c \ge c^{\ast}_{23}$ the unstable manifold $W^u_{P_3}$ connects to the equilibrium $(P_2,0)$ that is a hyperbolic sink. Analogously, using the symmetries \eq{sym1} and \eq{sym2}  for $c < -c^{\ast}_{12}$ the stable manifold $W^s_{P_1}$ connects to $(P_2,0)$ that is a hyperbolic source. 

Now consider  $c \in (-c^{\ast}_{12}, c^{\ast}_{23})$. 
At $Q = 0$  one has $Q' > 0$  for $P \in (P_1,P_2)$ and $Q' < 0$ for $P\in (P_2, P_3)$. Since $P' < 0$  for $Q < 0$, both manifolds $W^u_{P_3}$ and $W^s_{P_1}$ must intersect the half-line $L_{P_2}$. Let us denote by $(P_2,Q_3(c))$ and $(P_2,Q_1(c))$,  the first (with respect to $\xi$) intercept of $W^u_{P_3}$  and the last intercept of $W^s_{P_1}$ with $L_{P_2}$, respectively. These intercepts continuously depend on the parameter $c$, i.e., the function $w(c) = Q_3(c) - Q_1(c)$ is continuous. Also 
by continuity $Q_3(c) \rightarrow 0^-$ and $Q_1(c) \nrightarrow 0$  as $c \rightarrow (c^{\ast}_{23})^-$ 
and $Q_1(c) \rightarrow 0^-$ and $Q_3(c) \nrightarrow 0$ as $c \rightarrow -(c^{\ast}_{12})^+$. Hence $w(c^{\ast}_{23})w(-c^{\ast}_{12}) < 0$ and the function $w(c)$ must have a root inside the interval $(-c^{\ast}_{12}, c^{\ast}_{23})$. 
But that means that the invariant manifolds $W^u_{P_3}$ and $W^s_{P_1}$ connect at $P = P_2$ and therefore for such a $c$ there exists a heteroclinic orbit satisfying \eq{TypeD}. The property $P_1< P(\xi) < P_3$ follows from the fact the $P' = Q < 0$ for $Q < 0$. 

It remains to prove that such a $c$ is unique. We prove this claim by a contradiction. 
Assume that there are two heteroclinic orbits $(P_{c_1}(\xi),Q_{c_1}(\xi))$ and $(P_{c_2}(\xi), Q_{c_2}(\xi))$ for $c_1 < c_2$. 
These orbits coincide with the stable manifolds of the equilibrium $(P_1,0)$ that asymptotically approach as $\xi \rightarrow \infty$ 
the stable manifolds of the linearized flow near $P_1$ given by the lines $Q = \la^-_{c_1}(P-P_1)$ and $Q = \la^-_{c_2}(P-P_1)$, $P > P_1$. 
Analogously, these orbits coincide with the unstable manifolds of the equilibrium $(P_3,0)$ that asymptotically 
approach the unstable manifolds of the linearized flow near $(P_3,0)$ given by the lines $Q = \la^+_{c_1}(P-P_3)$ and $Q = \la^+_{c_2}(P-P_3)$, $P < P_3$. 
These eigenvalues are given for $j=1,2$ by the formulae
\begin{eqnarray}
\la^-_{c_j} & = & \frac{1}{2D(P_1)} \left( M(P_1) - c_j - \sqrt{(M(P_1) - c_j)^2 - 4F'(P_1)D(P_1)}\right), 
\nonumber \\
\la^+_{c_j} & = & \frac{1}{2D(P_3)} \left( M(P_3) - c_j + \sqrt{(M(P_3) - c_j)^2 - 4F'(P_3)D(P_3)}\right)\, . 
\nonumber
\end{eqnarray}
Since both functions $y = x \pm \sqrt{x^2 + b}$ are increasing for $b >0$ and also $M_i- c_1 > M_i - c_2$ 
for $i=1,2$, the eigenvalues satisfy
$$
\la^-_{c_1} < \la^-_{c_2} < 0 < \la^+_{c_1} < \la^-_{c_2}\, . 
$$
Therefore, the orbits $(P_{c_1},Q_{c_1})$ and $(P_{c_2}, Q_{c_2})$ must have a nondegenerate intersection at which they satisfy  $Q_{c_2}' > Q_{c_1}'$. 
But at any common point $(\widehat{P}, \widehat{Q})$ of the phase plane the flows are given by 
$$
P_{c_j}' = \widehat{Q}, \qquad
Q_{c_j}' = \frac{M(\widehat{P})}{D(\widehat{P})} \, \widehat{Q} - \frac{f(\widehat{P})}{D(\widehat{P})} - \frac{\widehat{Q}}{D(\widehat{P})} \, c_j \, ,
$$
and hence 
$Q_{c_1}' > Q_{c_2}'$ yielding a contradiction. 
Note that we have also proved that $c^{\ast} < c^{\ast}_{23}$.
\end{proof}

\begin{lemma}
Assume that $f(P)$ satisfies assumptions (S1)--(S4). 
Let $0 = P_1 < \dots < P_{n} = 1$, $n \ge 5$, be the zero points of $f(P)$ in $[0,1]$. 
Furthermore let $k, \ell, m$ be nonnegative integers such that 
$1 \le k < \ell < m \le n$ and 
$$
f'(P_k) < 0, \qquad f'(P_{\ell})< 0, \qquad f'(P_m) < 0,
$$
and that for the flow \eq{e1}--\eq{e2}
\begin{itemize}
\item
 the heteroclinic orbit from $P_m$ to $P_{\ell}$ exists for $c = c^{\ast}_{\ell, m}$;
\item
$P_{\ell}$ is the last connected saddle to $P_{m}$ before $P_{k}$;
\item
the heteroclinic orbit from $P_{\ell}$ to $P_{k}$ exists for $c = c^{\ast}_{k, \ell}$.
\end{itemize}
Then a saddle-saddle heteroclinic orbit  $(P(\xi),Q(\xi))$ satisfying
\begin{gather}
\lim_{\xi \rightarrow -\infty} (P(\xi),Q(\xi)) =  (P_{m}^-,0^-), \qquad
 \lim_{\xi \rightarrow \infty}(P(\xi),Q(\xi)) = (P_{k}^+,0^-), \label{TypeDa}\\
\mbox{$P_{k} < P(\xi) < P_{m}$  for all $\xi \in \RR$} 
\label{TypeDb}
\end{gather}
exists if and only if 
$$
c^{\ast}_{k, \ell} <  c^{\ast}_{\ell, m}, \qquad \mbox{and} \qquad 
c = c^{\ast}_{k,m} \in (c^{\ast}_{k, \ell},  c^{\ast}_{\ell, m}).
$$ 
\label{Lemma3}
\end{lemma}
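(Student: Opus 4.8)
The plan is to track the unstable manifold $W^u_{P_m}(c)$ of the saddle $P_m$ as $c$ varies and, for each fixed $c$, to compare it with $W^u_{P_\ell}(c)$, exactly in the spirit of Lemmas~\ref{Lemma1} and~\ref{Lemma2}. Two facts drive everything. First, two orbits of the same flow \eq{e1}--\eq{e2} (same $c$) cannot cross, so if one lies below another at some $P$ it lies below for all smaller $P$ where both are defined. Second, on the lower half-plane $Q<0$ one has $\partial Q'/\partial c=-Q/D(P)>0$, while $d\la^{+}_{P^{\ast}}/dc<0$ and $d\la^{-}_{P^{\ast}}/dc<0$ at every saddle; consequently $W^u_{P_m}(c)$ moves monotonically upward as $c$ increases. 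In particular, for $c<c^{\ast}_{\ell,m}$ the manifold $W^u_{P_m}(c)$ lies strictly below the connecting orbit $W^u_{P_m}(c^{\ast}_{\ell,m})$ and therefore crosses $L_{P_\ell}$ with $Q<0$, i.e.\ it overshoots $P_\ell$; for $c\ge c^{\ast}_{\ell,m}$ it does not reach $P_\ell$ at all.

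Next I would pin down a lower bracket. Since $c^{\ast}_{k,\ell}<c^{\ast}_{\ell,m}$, the orbit $W^u_{P_m}(c^{\ast}_{k,\ell})$ overshoots $P_\ell$: at the moment it crosses $L_{P_\ell}$ it sits at $(P_\ell,q)$ with $q<0$, whereas $W^u_{P_\ell}(c^{\ast}_{k,\ell})$ leaves $(P_\ell,0)$. Thus at $P=P_\ell$ the former lies below the latter, and by non-crossing of same-$c$ orbits it remains below $W^u_{P_\ell}(c^{\ast}_{k,\ell})$ for all $P<P_\ell$. As this comparison orbit connects to $P_k$ (staying in $Q<0$ and $P>P_k$), the lower orbit must descend faster and reach $L_{P_k}$ at a strictly negative $Q$; hence $W^u_{P_m}(c^{\ast}_{k,\ell})$ overshoots $P_k$.

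Existence then follows from a supremum argument. Let $A=\{c:\ W^u_{P_m}(c)\ \text{overshoots}\ P_k\}$. Overshoot is a transversal condition ($P'=Q<0$ at $L_{P_k}$), so $A$ is open; it is nonempty (it contains $c^{\ast}_{k,\ell}$) and bounded above by $c^{\ast}_{\ell,m}$ (for $c\ge c^{\ast}_{\ell,m}$ the orbit cannot reach $P_k<P_\ell$). Set $c^{\ast}_{k,m}=\sup A$. For $c=c^{\ast}_{k,m}$ the orbit is not in $A$ and, being the limit of overshooting orbits, stays in $Q<0$ with $P$ decreasing; it cannot cross $Q=0$ transversally in $(P_k,P_\ell)$ either, since that too is an open condition and would contradict overshoot from below. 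Hence $P(\xi)$ decreases monotonically to some $P_\infty\in[P_k,P_\ell)$ with $Q\to0$, and $(P_\infty,0)$ is an equilibrium. Here the hypothesis that $P_\ell$ is the last connected saddle to $P_m$ before $P_k$ is decisive: it forbids $W^u_{P_m}$ from being asymptotic, for any $c$, to any equilibrium strictly between $P_k$ and $P_\ell$, so $P_\infty=P_k$ and $W^u_{P_m}(c^{\ast}_{k,m})$ connects to $P_k$, giving \eq{TypeDa}--\eq{TypeDb}; monotonicity of $P$ is automatic from $P'=Q<0$. The bounds are strict: $c^{\ast}_{k,m}>c^{\ast}_{k,\ell}$ since $A$ is open around $c^{\ast}_{k,\ell}$, and $c^{\ast}_{k,m}<c^{\ast}_{\ell,m}$ because at $c^{\ast}_{\ell,m}$ the orbit connects to $P_\ell$, not $P_k$. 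Uniqueness I would obtain verbatim as in Lemma~\ref{Lemma2}: a second connection at $c_1<c_2$ would lie below the first near $P_m$ (as $\la^{+}_{P_m}(c_1)>\la^{+}_{P_m}(c_2)$) but above it near $P_k$ (as $\la^{-}_{P_k}(c_1)>\la^{-}_{P_k}(c_2)$), forcing a transversal crossing at which $\partial Q'/\partial c>0$ yields the contradictory slope inequality.

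Finally, for the converse I would show that $c^{\ast}_{k,\ell}\ge c^{\ast}_{\ell,m}$ precludes any connection. If $c\ge c^{\ast}_{\ell,m}$ then $W^u_{P_m}(c)$ does not overshoot $P_\ell$ and cannot reach $P_k$. If $c<c^{\ast}_{\ell,m}\le c^{\ast}_{k,\ell}$ then $c<c^{\ast}_{k,\ell}$, so $W^u_{P_\ell}(c)$ overshoots $P_k$; since $W^u_{P_m}(c)$ overshoots $P_\ell$ and, as above, lies below $W^u_{P_\ell}(c)$ thereafter, it overshoots $P_k$ as well. In neither regime is there a connection. I expect the main obstacle to be the middle of the supremum argument, namely verifying that the limiting orbit at $c^{\ast}_{k,m}$ can only be asymptotic to $P_k$; this is precisely where the \emph{last connected saddle} hypothesis is indispensable, as it rules out the orbit settling onto, or limiting to, an intermediate equilibrium, which is the one scenario that would break the clean overshoot/fall-short dichotomy.
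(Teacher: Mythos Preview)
Your argument is essentially sound and close in spirit to the paper's, but the mechanics differ in one place and that difference creates a small gap.

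\textbf{Where you diverge from the paper.} The paper does not run a one-sided supremum argument on $W^u_{P_m}$ alone. Instead, exactly as in Lemma~\ref{Lemma2}, it tracks \emph{both} $W^u_{P_m}(c)$ and $W^s_{P_k}(c)$, records their first intercepts with $L_{P_\ell}$, and applies the intermediate value theorem to the difference of these intercepts on the interval $(c^{\ast}_{k,\ell},c^{\ast}_{\ell,m})$. As $c\to(c^{\ast}_{\ell,m})^-$ the intercept of $W^u_{P_m}$ with $L_{P_\ell}$ tends to $0^-$ while that of $W^s_{P_k}$ stays bounded away from $0$; as $c\to(c^{\ast}_{k,\ell})^+$ the roles are reversed. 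Hence the difference changes sign and vanishes at some interior $c^{\ast}_{k,m}$, which is automatically strictly inside $(c^{\ast}_{k,\ell},c^{\ast}_{\ell,m})$.

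\textbf{The gap in your version.} In your supremum argument you assert $c^{\ast}_{k,m}<c^{\ast}_{\ell,m}$ ``because at $c^{\ast}_{\ell,m}$ the orbit connects to $P_\ell$, not $P_k$''. This is circular: you deduce that the orbit at $\sup A$ connects to $P_k$ only after restricting $P_\infty$ to $[P_k,P_\ell)$, and that restriction already presupposes $\sup A<c^{\ast}_{\ell,m}$. Nothing you have written excludes the scenario $A=(-\infty,c^{\ast}_{\ell,m})$, in which $\sup A=c^{\ast}_{\ell,m}$ and the limiting orbit lands on $P_\ell$, not $P_k$. The ``last connected saddle'' hypothesis does not help here, since it only forbids limits strictly between $P_k$ and $P_\ell$. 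The clean fix is precisely the paper's two-sided comparison: for $c$ just below $c^{\ast}_{\ell,m}$ one has $W^u_{P_m}(c)$ crossing $L_{P_\ell}$ \emph{above} $W^s_{P_k}(c)$; by non-crossing it then stays above $W^s_{P_k}(c)$ all the way to $P_k$ and therefore cannot overshoot $P_k$, so such $c\notin A$ and $\sup A<c^{\ast}_{\ell,m}$. Once you import this one observation your supremum argument goes through; alternatively, use the IVT on intercepts directly as the paper does. Your treatment of the converse direction and of uniqueness matches the paper's.
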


\begin{proof}
First, if $c^{\ast}_{k, \ell}  \ge  c^{\ast}_{\ell, m}$ then for 
$c \ge  c^{\ast}_{\ell, m}$  the unstable orbit $W^u_{P_{m}}$  converges to $P_{\ell}$ or does not reach $P_{\ell}$ and thus a heteroclinic orbit from $P_{m}$ to $P_{k}$ satisfying the conditions of the Lemma does not exists. On the other hand, for 
$c <  c^{\ast}_{\ell, m}$ the orbit $W^u_{P_{m}}$ intersects $L_{P_{s}}$ and thus it lies at $P = P_{\ell}$ under the heteroclinic orbit connecting $P_{\ell}$ to $P_{k}$. Since the heteroclinic orbit is invariant with respect to the flow, and $P'= Q <0$ in the lower half-plane of the phase plane $(P,Q)$, the orbit $W^u_{P_m}$ will remain under the heteroclinic orbit on the whole interval $P \in [P_k, P_{\ell}]$ and thus the heteroclinic connection from  $P_m$ to $P_k$ does not exist for any $c$. 

However, situation is different if  $c^{\ast}_{k, \ell}  <  c^{\ast}_{\ell, m}$. Then one can compare the location of intersects of
$W^u_{P_m}$ and $W^s_{P_k}$ with $L_{P_s}$ for all $c \in (c^{\ast}_{k, \ell},  c^{\ast}_{\ell, m})$. 
An argument analogous to proof of Lemma~\ref{Lemma2} then concludes the proof of the Lemma. 
\end{proof}

\begin{lemma}
Assume that $f(P)$ satisfies assumptions (S1)--(S4). 
Let $0 = P_1 < \dots < P_{n} = 1$, $n \ge 4$, be the zero points of $f(P)$ in $[0,1]$. 
Furthermore let $k, \ell, m$ be nonnegative integers such that 
$1 \le k < \ell < m \le n$ and 
$$
f'(P_k) > 0, \qquad f'(P_{\ell})< 0, \qquad f'(P_m) < 0,
$$
and that for the flow \eq{e1}--\eq{e2}
\begin{itemize}
\item
 the heteroclinic orbit from $P_m$ to $P_{\ell}$ exists for $c = c^{\ast}_{\ell, m}$;
\item
$P_{\ell}$ is the last connected saddle to $P_m$ before $P_k$;
\item
the heteroclinic orbit from $P_{\ell}$ to $P_{k}$ exists for $c \ge c^{\ast}_{k, \ell}$.
\end{itemize}
Then a saddle-node heteroclinic orbit  $(P(\xi),Q(\xi))$ satisfying
\eq{TypeDa}--\eq{TypeDb} exists if and only if 
$$
c^{\ast}_{k, \ell} <  c^{\ast}_{\ell, m}, \qquad \mbox{and} \qquad 
c \in [c^{\ast}_{k,m}, c^{\ast}_{\ell, m}), \qquad \mbox{where} \quad
c^{\ast}_{k,m} \in (c^{\ast}_{k, \ell},  c^{\ast}_{\ell, m}).
$$ 
\label{Lemma4}
\end{lemma}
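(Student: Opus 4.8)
The plan is to follow the architecture of the proof of Lemma~\ref{Lemma3}, replacing the terminal saddle by the node $P_k$ (now $f'(P_k)>0$) and the single-speed matching of Lemma~\ref{Lemma2} by the range-of-speeds capture mechanism of Lemma~\ref{Lemma1}. Throughout I restrict attention to the lower half-plane $Q<0$, where \eq{e1} gives $P'=Q<0$, so every orbit is a monotone graph $Q=Q(P)$ sweeping $P$ to the left, and distinct orbits of the autonomous flow \eq{e1}--\eq{e2} cannot cross away from equilibria.

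First I would dispose of the necessity of $c^*_{k,\ell}<c^*_{\ell,m}$ together with the upper cut-off, exactly as in the opening paragraph of the proof of Lemma~\ref{Lemma3}. For $c\ge c^*_{\ell,m}$ the orbit $W^u_{P_m}$ converges to $P_\ell$ (at $c=c^*_{\ell,m}$) or, since $P_\ell$ is the last connected saddle to $P_m$ before $P_k$, does not reach $L_{P_\ell}$ for $c>c^*_{\ell,m}$; in either case it never enters the strip below $P_\ell$, so no connection to $P_k$ is possible. If moreover $c^*_{k,\ell}\ge c^*_{\ell,m}$, then for every $c<c^*_{\ell,m}\le c^*_{k,\ell}$ the orbit $W^u_{P_\ell}$ overshoots the node $P_k$ (the hypothesised saddle-node connection requires $c\ge c^*_{k,\ell}$); since $W^u_{P_m}$ meets $L_{P_\ell}$ strictly below the point $(P_\ell,0)$ from which $W^u_{P_\ell}$ emanates, non-crossing forces $W^u_{P_m}$ to stay under $W^u_{P_\ell}$ throughout $(P_k,P_\ell)$, so $W^u_{P_m}$ also overshoots $P_k$ and no connection exists. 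This settles nonexistence whenever $c^*_{k,\ell}\ge c^*_{\ell,m}$.

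Assume now $c^*_{k,\ell}<c^*_{\ell,m}$ and fix $c\in(c^*_{k,\ell},c^*_{\ell,m})$. The \emph{last connected saddle} hypothesis guarantees that $W^u_{P_m}$ overshoots $P_\ell$ and, overshooting every equilibrium $P_r$ with $k<r<\ell$ (which it must, since by hypothesis it connects to none of them for any $c$), reaches a neighborhood of $P_k$; let $Q_m(c)<0$ be the height at which it first meets $L_{P_\ell}$. Because $P_k$ is a node, the relevant separatrix is the fast stable manifold $W^{ss}_{P_k}$: an orbit entering $(P_k,P_\ell)$ from the top is captured by $P_k$ (approaching along the slow direction $\lambda^+_{P_k}$) if it lies above $W^{ss}_{P_k}$, overshoots $P_k$ if it lies below, and connects along $W^{ss}_{P_k}$ in the borderline case, exactly as in the critical-speed dichotomy of Lemma~\ref{Lemma1}. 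Tracing $W^{ss}_{P_k}$ backward to its height $\tilde Q(c)$ on $L_{P_\ell}$, set $w(c)=Q_m(c)-\tilde Q(c)$; a connection $P_m\to P_k$ exists precisely when $w(c)\ge 0$. Continuity pins down the endpoints: as $c\uparrow c^*_{\ell,m}$ one has $Q_m(c)\to 0^-$ while $\tilde Q(c)$ stays bounded below $0$ (the separatrix does not emanate from the equilibrium $(P_\ell,0)$, since $c>c^*_{k,\ell}$ forces $W^u_{P_\ell}\neq W^{ss}_{P_k}$), so $w(c)>0$; as $c\downarrow c^*_{k,\ell}$ one has $\tilde Q(c)\to 0^-$ (there $W^{ss}_{P_k}=W^u_{P_\ell}$) while $Q_m(c)$ stays bounded below $0$, so $w(c)<0$. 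The non-crossing argument of Lemmae~\ref{Lemma1}--\ref{Lemma3} makes $Q_m(c)$ increasing and $\tilde Q(c)$ decreasing in $c$, so $w$ is strictly increasing with a unique zero $c^*_{k,m}\in(c^*_{k,\ell},c^*_{\ell,m})$. Hence the connection to $P_k$ holds exactly for $c\in[c^*_{k,m},c^*_{\ell,m})$ — closed on the left, where $W^u_{P_m}$ coincides with $W^{ss}_{P_k}$, and open on the right, where the $P_m\to P_\ell$ connection takes over and $W^u_{P_m}$ no longer overshoots $P_\ell$. Monotonicity of $P(\xi)$ is immediate from $P'=Q<0$.

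The main obstacle is the node endpoint, i.e.\ making rigorous that capture by $P_k$ is an open condition on the right and a closed condition on the left. Concretely one must (i) justify that $W^{ss}_{P_k}$, traced backward, actually reaches $L_{P_\ell}$ rather than being absorbed at an intermediate equilibrium — this is where the \emph{last connected saddle} hypothesis and the alternation of saddles and nodes between $P_k$ and $P_\ell$ are essential — and (ii) control the competing limits at $c^*_{\ell,m}$, where the shadowing of $W^u_{P_\ell}$ by $W^u_{P_m}$ must be close enough to keep $W^u_{P_m}$ above the fast stable manifold. The strict monotonicity of $\tilde Q(c)$, which needs a non-crossing comparison for the fast stable manifolds at different $c$, is the one step not supplied verbatim by the earlier lemmas and is where I would spend the most care.
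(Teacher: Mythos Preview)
Your proposal is correct and follows essentially the same architecture as the paper's proof: dispose of $c^{\ast}_{k,\ell}\ge c^{\ast}_{\ell,m}$ by trapping $W^u_{P_m}$ below $W^u_{P_\ell}$, then for $c^{\ast}_{k,\ell}< c^{\ast}_{\ell,m}$ compare the intercepts of $W^u_{P_m}$ and $W^{ss}_{P_k}$ on $L_{P_\ell}$, using the endpoint behavior at $c^{\ast}_{\ell,m}$ and $c^{\ast}_{k,\ell}$ (where the paper, like you, identifies the critical $P_\ell\to P_k$ connection with $W^{ss}_{P_k}$). The only stylistic difference is in establishing the interval form: the paper sandwiches $W^u_{P_m}(c)$ between $W^u_{P_m}(c_1)$ and $W^u_{P_m}(c_2)$ for $c_1<c<c_2$, whereas you argue monotonicity of $w(c)=Q_m(c)-\tilde Q(c)$ directly---both rest on the same non-crossing comparison, and the gaps you flag (backward extension of $W^{ss}_{P_k}$ to $L_{P_\ell}$, the shadowing near $c^{\ast}_{\ell,m}$) are also left implicit in the paper.
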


\begin{proof}
If $c^{\ast}_{k, \ell}  \ge  c^{\ast}_{\ell, m}$ then for 
any $c \ge  c^{\ast}_{\ell, m}$  the unstable orbit $W^u_{P_{m}}$  converges to $P_{\ell}$ or it does not reach $P_{\ell}$ and thus a heteroclinic orbit from $P_{m}$ to $P_{k}$ satisfying the conditions of Lemma does not exists. On the other hand, for 
any $c <  c^{\ast}_{\ell, m}$ the orbit $W^u_{P_{m}}$ intersects $L_{P_{\ell}}$ and thus it lies at $P = P_{\ell}$ under the heteroclinic orbit connecting $P_{\ell}$ to $P_{k}$. Since the heteroclinic orbit is invariant with respect to the flow, and $P'= Q <0$ in the lower half-plane of the phase plane $(P,Q)$, the orbit $W^u_{P_m}$ will remain under the heteroclinic orbit on the whole interval $P \in [P_k, P_{\ell}]$ and thus the heteroclinic connection from  $P_m$ to $P_k$ does not exist for any $c$. 

However, situation is different if  $c^{\ast}_{k, \ell}  <  c^{\ast}_{\ell, m}$. Then one can compare the location of intersects of $W^u_{P_m}$ and $W^{ss}_{P_k}$ with $L_{P_{\ell}}$ for all $c \in (c^{\ast}_{k, \ell},  c^{\ast}_{\ell, m})$. 
For $c = c^{\ast}_{\ell, m}$ the manifold $W^u_{P_{m}}$ coincides with the heteroclinic orbit connecting $P_m$ to $P_{\ell}$,
the manifold $W^u_{P_{\ell}}$ coincides with the heteroclinic orbit connecting $P_{\ell}$ to $P_{k}$ and 
the manifold $W^{ss}_{P_k}$ intersects $L_{P_{\ell}}$ at some $Q< 0$ (as it lies under $W^u_{P_{\ell}}$. 
Therefore, by continuity $W^{ss}_{P_k}$ lies under $W^u_{P_{m}}$ for $c \in (c^{\ast}_{\ell, m}-\eps, c^{\ast}_{\ell, m})$ for some small $\eps >0$. But at the same time $W^u_{P_{\ell}}$ for all such $c$ connects to $P_{k}$. Therefore, $W^u_{P_m}$ also connects to $P_k$ and thus the heteroclinic orbit from $P_m$ to $P_k$ exists. 
On the other hand, for $c = c^{\ast}_{k, \ell}$ the manifold $W^u_{P_m}$ lies under the heteroclinic orbit connecting $P_{\ell}$ to $P_k$ that coincides with $W^u_{P_{\ell}}$ and $W^{ss}_{P_k}$ and thus the heteroclinic orbit from $P_m$ to $P_k$ does not exists.

Now assume that two admissible heteroclinic orbits of type \eq{admsol1} exist for $c_1$ and $c_2$, $c_1 < c_2$. The comparison
argument shows that for any $c \in (c_1, c_2)$ the unstable manifold $W_{P_m}^u(c)$ lies below the unstable manifold $W^u_{P_m}(c_2)$
and above $W^u_{P_m}(c_1)$. Since both these manifolds connect to $(0,0)$ also $W^u_{P_m}(c)$ is a heteroclinic orbit. Therefore the set of
$c$ for which there exist a heteroclinic connection satisfying \eq{TypeA} is a connected set, i.e. an interval.  On the other hand, from the proof it follows that the interval, if non-empty, has the form $[c^{\ast}, c^{\ast\ast})$. 

\end{proof}

Now we proceed with the proof of   Theorem~\ref{th:existence}. 
\begin{proof}
We give proof of existence of the connecting orbits of type \eq{TypeA}. 
Existence of orbits of type \eq{TypeB} in all cases follows by an application of the symmetry \eq{sym2}. 
For the sake of clarity of the argument we first present the proof of part (D) before (C1) and (C2). 

{\bf (A1)}
The existence of connecting orbits of type \eq{TypeA} follows from Lemma~\ref{Lemma1} for $P_1 = 0$ and $P_2 = 1$. 

{\bf (B)}
The existence of connecting orbits of type \eq{TypeA} follows  from Lemma~\ref{Lemma2} for $P_1 = 0$ and $P_3 = 1$. 

{\bf (D)}
We denote by $0 = P_1 < P_2 < \dots < P_{m} = 1$ the roots of $f(P)$ in the interval $[0,1]$. 
Let $P_s$ be the last connected saddle to $P_m$ before $P_1$. Existence of such $s$ follows by Lemma~\ref{Lemma2}. 
We set 
$s_0 = m$, $s_1 = s$, $c^{\ast}_0 = c^{\ast}_{s,m}$ 
and define a recurrent decreasing sequence $P_{s_j}$, $j \ge 1$, of equilibria in the following way: 
if a heteroclinic connection from the saddle point $P_{s_j}$ to $P_1$ does not exist then $P_{s_{j+1}}$ is the last connected saddle from $P_{s_j}$ before $P_1$; we also denote $c^{\ast}_j$ the wave speed $c$ for which the connection from $P_{s_{j}}$ 
to $P_{s_{j+1}}$ exists. If the orbit from $P_{s_j}$ to $P_1$ exist we set $s_{j+1} = 1$ 
and $c^{\ast}_{j+1}$ is the wave speed for which the heteroclinic orbit exists. By Lemma~\ref{Lemma2} this sequence is well defined 
and finite. Note that by Lemma~\ref{Lemma3} we have either $s_2 = 1$ or
$c^{\ast}_0 < c^{\ast}_1 < \dots < c^{\ast}_{j+1}$ as otherwise there would be a contradiction with the recurrent definition of~$P_{s_{j+1}}$. 

We distinguish two cases depending on whether the heteroclinic orbit connecting the saddle point 
$P_{s}$ to the saddle point $P_1 = 0$ exists or not, i.e., whether $s_2 = 1$ or $s_2 > 1$. If it exists then the claim of the Theorem follows immediately from Lemma~\ref{Lemma3} and the condition characterizing the existence of the connecting orbit is $c^{\ast}_2 = c^{\ast}_{1,s} < c^{\ast}_{s,m} = c^{\ast}_1$ and the orbit exists if and only if 
$c^{\ast} = c^{\ast}_{1,m} \in (c^{\ast}_{1,s}, c^{\ast}_{s,m})$. 

On the other hand we show that if $s_2 > 1$ the heteroclinic orbit does not exists.  
For $c \ge c^{\ast}_1$ the manifold $W^u_{m}$ does not reach $P_{s}$ and thus the heteroclinic orbit from $P_{m}$ to $P_1$ does not exists. For  $c < c^{\ast}_1$ the manifold  $W^u_{m}$  lies under the heteroclinic orbits connecting 
$P_{s_r}$ to $P_{s_{r+1}}$ for all $r = 0,\dots, j$, thus it will intersect $L_0$ at $Q< 0$. Hence a heteroclinic orbit from $P_{m}$ to $P_1$ does not exists. 

{\bf (C1)}
The argument is analogous to (D). We denote by $0 = P_1 < P_2 < \dots < P_{m} = 1$ the roots of $f(P)$ in the interval $[0,1]$.
Let $P_s$ be the last connected saddle to $P_{m}$ before $P_1$. Existence of such $s$ follows by Lemma~\ref{Lemma2}. 
We set $s_0 = m$, $s_1 = s$, $c^{\ast}_0 = c^{\ast}_{s,m}$ 
and define a recurrent decreasing sequence $P_{s_j}$, $j \ge 1$, of equilibria in the following way: 
if a heteroclinic connection from $P_{s_j}$ to $P_1$ does not exist for any $c$ then $P_{s_{j+1}}$ is the last connected saddle from
$P_{s_j}$ before $P_1$; we also denote $c^{\ast}_j$ the wave speed $c$ for which the connection from $P_{s_{j}}$ 
to $P_{s_{j+1}}$ exists. If the orbit from $P_{s_j}$ to $P_1$ exist we set $s_{j+1} = 1$ and
$c^{\ast}_{j+1}$ is the minimum wave speed for which the heteroclinic orbit from $P_{s_j}$ to $P_1$ exists. By Lemmae~\ref{Lemma1} and \ref{Lemma2} this sequence is well defined and finite.
Note that by Lemma~\ref{Lemma4} we have either $s_2 = 1$ or  $c^{\ast}_0 < c^{\ast}_1 < \dots < c^{\ast}_{j+1}$ 
as otherwise there would be a contradiction with the recurrent definition of $P_{s_{j+1}}$. 

We distinguish two cases depending on whether the heteroclinic orbit connecting the saddle point 
$P_{s}$ to $P_1 = 0$ exists or not, i.e., whether $s_2 = 1$ or $s_2 > 1$. If it exists then the claim of the Theorem follows immediately from Lemma~\ref{Lemma4} and the condition characterizing the existence of the connecting orbit is $c^{\ast}_2 = c^{\ast}_{1,s} < c^{\ast}_{s,m} = c^{\ast}_1$ and the orbit exists if and only if 
$c \in [c^{\ast}_{1,m}, c^{\ast}_{s,m})$ where $c^{\ast}_{1,m} \in (c^{\ast}_{1,s}, c^{\ast}_{s,m})$. 
On the other hand the same argument as in (D) shows that if $s_2 > 1$ the heteroclinic orbit from $P_m$ to $P_1$ does not exists.  

{\bf (A2), (C2)}
The results follow from (A1) and (C1) by  an application of the symmetry \eq{sym1}.
\end{proof}

\section{Quadratic Nonlinearity with Homogeneous Diffusion}
\label{s:homogeneous}
While in general we are not able to provide a formula for the range of speeds $c$ for which the traveling wave for the system \eq{FKPP} exists, it the special case of a quadratic (monostable) nonlinearity 
\begin{equation}
f(P) = k P (1-P)\, , \qquad k > 0\, ,
\label{Fquadratic}
\end{equation}
and equal diffusion coefficients it is possible to specify it completely.

\begin{theorem}
Let $f(P)$ be given by \eq{Fquadratic} and $D = D_1 = D_2$. Then there exists a traveling wave profile satisfying \eq{secondorder} and
\eq{TypeA} {\bf if and only if} $c \ge c^{\ast} (M_1, M_2, D, k)$ where
\begin{equation}
c^{\ast} = 
\begin{cases}
M_1 + 2\sqrt{kD}\, , \qquad 
& \mbox{if $M_2 \le M_1 + 2\sqrt{kD}$,} \\
\displaystyle\frac{M_1+ M_2}{2} + \displaystyle\frac{2kD}{M_2- M_1}\, , \qquad 
& \mbox{if $M_2 \ge  M_1 + 2\sqrt{kD}$.}
\end{cases}
\end{equation}
\label{th:homo}
\end{theorem}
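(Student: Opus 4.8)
The plan is to use Lemma~\ref{Lemma1} to reduce the claim to the evaluation of one critical speed, and then to locate that speed with a single explicit invariant parabola together with two phase-plane comparison arguments. With $D_1=D_2=D$ the system \eq{e1}--\eq{e2} has constant diffusion $D(P)\equiv D$, affine drift $M(P)=M_1+uP$ with $u:=M_2-M_1$, and $f(P)=kP(1-P)>0$ on $(0,1)$. Lemma~\ref{Lemma1} then applies with $P_1=0$, $P_2=1$, so the speeds admitting an orbit of type \eq{TypeA} form a half-line $[c^*,\infty)$ and the theorem reduces to identifying $c^*$. The necessary condition \eq{cest} gives $c^*\ge M_1+2\sqrt{kD}=:c_0$; moreover for $c<c_0$ the eigenvalues \eq{quadroot} at $P=0$ are complex, so $(0,0)$ is a spiral and no orbit contained in $Q<0$ can converge to it.

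Next I would construct the explicit orbit with the ansatz $Q=aP(1-P)$. Inserting it into the orbit equation $Q\,dQ/dP=\frac{M(P)-c}{D}\,Q-\frac{f(P)}{D}$ and matching the constant and linear coefficients in $P$ forces $a=-u/(2D)$ and $c=c_{\mathrm{exp}}:=\frac{M_1+M_2}{2}+\frac{2kD}{u}$. When $u>0$ this parabola lies in $Q<0$ on $(0,1)$ and is a genuine admissible orbit from $(1,0)$ to $(0,0)$, whence $c^*\le c_{\mathrm{exp}}$; the arithmetic--geometric mean inequality gives $c_{\mathrm{exp}}\ge c_0$ with equality exactly at $u=2\sqrt{kD}$. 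The crucial fact is the direction along which the parabola enters the origin: its slope there equals $a$, and comparing $a$ with the two roots \eq{quadroot} at $P=0$ (whose product is $k/D$ and sum $(M_1-c)/D$) shows $a=\lambda^+_0$, the weak direction, when $u<2\sqrt{kD}$, and $a=\lambda^-_0$, the fast direction carrying $W^{ss}_{0}$, when $u>2\sqrt{kD}$. This dichotomy is exactly the split in the two formulas for $c^*$.

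For the pushed regime $M_2\ge M_1+2\sqrt{kD}$ I would show that no orbit exists below $c_{\mathrm{exp}}$. For $c_0\le c<c_{\mathrm{exp}}$ a short computation on the parabola shows that the vector field's slope exceeds the parabola's slope by the positive constant $(c_{\mathrm{exp}}-c)/D$; since $W^u_{P_2}$ leaves $(1,0)$ below the parabola (here $\lambda^+_1>-a$), a one-sided comparison keeps $W^u_{P_2}$ strictly below the parabola throughout $(0,1)$. As now $a<\lambda^-_0$, expanding $W^u_{P_2}$ near the origin in the eigenbasis shows that its weak component keeps a fixed sign, forcing $P$ through $0$ at $Q<0$: the trajectory overshoots and cannot limit on $(0,0)$. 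Combined with the spiral obstruction for $c<c_0$, this yields $c^*=c_{\mathrm{exp}}$.

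For the pulled regime $M_2\le M_1+2\sqrt{kD}$ I would instead exhibit an orbit at $c=c_0$. Writing $\lambda_0=-\sqrt{k/D}$ for the (double) eigenvalue at the origin, I would take $\mathcal R=\{0\le P\le 1,\ \lambda_0 P(1-P)\le Q\le 0\}$: a direct check reduces the inward-flow condition on the lower parabola to the inequality $u\le 2\sqrt{kD}$, so $\mathcal R$ is forward invariant, and the same inequality yields $\lambda^+_1\le-\lambda_0$, i.e.\ $W^u_{P_2}$ enters $\mathcal R$. Since $P'=Q<0$ inside $\mathcal R$, the trajectory descends monotonically to the only equilibrium $(0,0)$ in $\mathcal R$, giving $c^*\le c_0$ and hence $c^*=c_0$; the threshold $u=2\sqrt{kD}$ (where $c_0=c_{\mathrm{exp}}$) is the common degenerate limit of both constructions, and orbits of type \eq{TypeB} follow by the symmetry \eq{sym2}. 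I expect the main obstacle to be the non-existence step in the pushed case: beyond trapping $W^u_{P_2}$ below the explicit parabola one must run the delicate local argument at the node, showing that a trajectory lying below the fast direction $\lambda^-_0$ is forced to overshoot, and the entire proof hinges on correctly reading off whether the explicit parabola reaches $(0,0)$ along the weak or the fast eigendirection.
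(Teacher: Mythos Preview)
Your proposal is correct and follows essentially the same approach as the paper: both hinge on the explicit parabolic orbit $Q=aP(1-P)$ with $a=-(M_2-M_1)/(2D)$ at $c=\widehat{c}$, the identification of whether this parabola enters $(0,0)$ along the weak or the fast eigendirection (giving the pulled/pushed dichotomy), and a forward-invariant parabolic region $\mathcal R$ for existence in the pulled regime. The only cosmetic difference is in the pushed non-existence step, where the paper argues by contradiction via an impossible crossing of $W^u_1(c)$ with the explicit parabola, while you trap $W^u_1(c)$ below the parabola directly and then invoke the separatrix property of $W^{ss}_0(c)$; both arguments rest on the same standard fact about stable nodes that you correctly flag as the delicate point.
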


The resulting formula for the critical minimal speed $c^{\ast}$ of the wave has a simple interpretation. Without loss of generality we can set $M_1 = 0$ as otherwise we may just consider the system in the reference frame traveling with speed $M_1$. If $M_2 \le c_{pull} = 2\sqrt{kD}$ then the drift of the genotype with the advection $M_2$ is slower then the pulling speed $c_{pull}$ of the other genotype and   the traveling wave will be pulled with the speed $c_{pull}$. The value of $M_2$ will influence only the shape of the wave with the transition zone between the bulk of the wave ($P\approx 1$) and its tail ($P \approx 0$) being narrower as $M_2 - M_1$ approaches $c_{pull}$.  
On the other hand, once the advection speed of the bulk $M_2$ becomes supercritical, i.e., larger than the pulling speed, the pushing speed of the bulk of the wave will overtake  through the action of diffusion the pulling speed in the tail region  and the wave will be traveling faster, i.e., there will be no admissible traveling waves with speed $c_{pull}$. The diffusion that mediates the influence of the bulk of the wave to its tail will attenuate the advection of the bulk and thus the minimal wave velocity $c^{\ast} \in (c_{pull}, M_2)$. 
The magnitude of the speed up of the wave due to the drift difference $M_2 - M_1$ over the averaged expected mean drift $(M_1 + M_2)/2$ is inversely proportional to $M_2 - M_1$ and it is equal to $2kD/(M_2 - M_1)$ that converges to $0^+$ as $M_2 - M_1 \rightarrow \infty$. 

The strong nonlinear dependence of $c^{\ast}$ on $M_2$ is in a strong contrast with the speed of the wave for the cubic 
$$
f(P) = kP(1-P)(P-\widehat{P}), \qquad 
\widehat{P} \in (0,1).
$$
In that case (see \cite{NK2016}) the unique traveling wave satisfying \eq{TypeA} is given by ($\xi = x - ct$)
\begin{equation*}
P(\xi) = \left[1 + \exp \left(\frac{k\zeta\xi}{2}\right)\right]^{-1}\, , 
\quad
\zeta =\frac{4}{-(M_2 - M_1) + \sqrt{(M_2 - M_1)^2 + 8 kD}}\, ,
\end{equation*}
and the wave travels with the speed
\begin{equation}
c_{cubic}^{\ast} =  \frac{M_1+M_2}{2} +
\frac{2kD (1-2\widehat{p})}{M_2 - M_1 \pm \sqrt{(M_2 - M_1)^2 + 8 kD}}\, .
\label{cubicspeed}
\end{equation}
Particularly note that if $\widehat{p} = 0.5$ then the speed of the wave depends linearly on $M_2$. 

\begin{proof}
The proof is based on two observations. 
The first observation is that for 
\begin{equation} 
c = \widehat{c} = 
\frac{M_1+ M_2}{2} + \frac{2f'(0)D}{M_2- M_1}  = 
\frac{M_1+M_2}{2} - \frac{2f'(0)D}{M_1- M_2}\, ,
\label{cast}
\end{equation}
it is easy to see that 
$$
(M_1 - \widehat{c}\, )^2 - 4f'(0)D = \left( \frac{M_1 - M_2}{2} - \frac{2f'(0)D}{M_1 - M_2}\right)^2
$$
and thus the two eigenvalues of the linearization of \eq{e1}--\eq{e2} at the origin $\la^-_0 \le \la^+_0 < 0$ are given by 
\begin{equation}
\la_0^{\pm} \in \left\{ \frac{M_1- M_2}{2D}, \, \frac{2f'(0)}{M_1 - M_2}\right\}\, .
\label{laeq}
\end{equation}
Let $\widehat{\la} = (M_1-M_2)/2D$ be one of the eigenvalues in \eq{laeq}.
We will show that for $f(P)$ satisfying  \eq{Fquadratic} the function uniquely determined in the phase plane $(P,Q)$ by
\begin{equation}
P' = Q =  \widehat{\la}\, \frac{f(P)}{f'(0)}\, 
\label{sol}
\end{equation}
is a solution%
\footnote{Compare with the function $h(p)$ given by \eq{hp} used in the proof of Lemma~\ref{Lemma1}.}
 of \eq{e1}--\eq{e2} for $c = \widehat{c}$.
Therefore by Lemma~\ref{Lemma1} the solution of \eq{secondorder} satisfying  \eq{TypeA} exists for all 
$c \ge \widehat{c}$.  Furthermore, we show that $\la_0^- = \widehat{\la}$  for $M_2 \ge M_1 + 2\sqrt{f'(0)D}$. Therefore the solution given by \eq{sol} coincides with the fast stable manifold $W^{ss}_0(\widehat{c})$ of $(0,0)$.

Next we show that if $\la_0^- = \widehat{\la}$ and $c \in [M_1 + 2\sqrt{f'(0)D}, \widehat{c}\, )$ then there is no solution of \eq{e1}--\eq{e2} satisfying \eq{TypeA}.  Denote by $(P_{\widehat{c}}, Q_{\widehat{c}})$ the heteroclinic connection for $c = \widehat{c}$ given by \eq{sol}, and consider the corresponding  unstable manifold $W^u_1(c)$ of the equilibrium $(1,0)$.
It is easy to see that 
$$
\frac{d}{dc}\la^-_0(c) <0, \qquad
\frac{d}{dc}\la^+_1(c) < 0,
\qquad
\mbox{for all $c \in [M_1 + 2\sqrt{f'(0)D}, \widehat{c} \, )$.}
$$ 
Indeed
\begin{eqnarray}
\la^-_0(c) & =&  \frac{1}{D}\left( M_1 - c - \sqrt{(M_1-c)^2 - 4f'(0)D}\right)\, , \label{ladefa}\\
\la^+_1(c)& =& \frac{1}{D}\left( M_2 - c + \sqrt{(M_2-c)^2 + 4f'(0)D}\right)\, ,
\label{ladefb}
\end{eqnarray}
where the function $g^-(x) = x - \sqrt{x^2 - h}$ is increasing for $x < -\sqrt{h}$  and the function $g^+(x) = x + \sqrt{x^2 + h}$ is increasing for all $x$. Therefore
\begin{equation}
\la_0^-(c) >  \la_0^-(\widehat{c})\, ,\qquad 
\la^+_1(c) > \la_1^+ (\widehat{c})\, .
\label{compla}
\end{equation}
The fast stable manifold $W^{ss}_0(c)$ of $(0,0)$ forms a separatrix in the region $\{(P,Q); 0 \le P \le 1, Q < 0\}$ of the phase space $(P,Q)$ of trajectories lying below $W^{ss}_0(c)$ and intersecting $L_0$ at some $Q < 0$ and trajectories lying above $W^{ss}_0(c)$ converging to $(0,0)$ as $\xi \rightarrow \infty$ and intersecting $Q = 0$ at some $P > 0$. 
Therefore if $W^u_1(c)$ coincides with the admissible heteroclinic orbit connecting $(1,0)$ to $(0,0)$, then it must lie on or above $W^{ss}_0(c)$. Close to $(0,0)$ 
the manifold $W^u_1(c)$ lies by \eq{compla} above $(P_{\widehat{c}}, Q_{\widehat{c}})$ and close to $(1,0)$ it lies below $(P_{\widehat{c}}, Q_{\widehat{c}})$. 
But since $c < \widehat{c}$ the slopes of these two curves in the phase space $(P,Q)$ satisfy at any point of their intersection the inequality
\begin{equation}
\frac{dQ_c}{dP} = \frac{1}{D} \left[ (M(P) - c) - \frac{f(P)}{Q}\right] >
 \frac{1}{D} \left[ (M(P) - \widehat{c}\, ) - \frac{f(P)}{Q}\right] = \frac{dQ_{\widehat{c}}}{dP}
\, ,
\nonumber
\end{equation}
yielding a contradiction with the assumption that $W^u_1(c)$ coincides with the admissible heteroclinic orbit connecting $(1,0)$ to $(0,0)$.

The second important observation is that the region
$$
\mathcal R = \{ (P,Q); P \in [0,1], g(P) \le Q \le 0,g(0) =  g(1) = 0, g(P) < 0, P \in (0,1)\}\, ,
$$
where 
$$
g(P) = \la^-_0 \, \frac{f(P)}{f'(0)}\, ,
$$ 
is forward invariant%
\footnote{Compare with \eq{sol} and \eq{hp}.}
with respect to the flow \eq{e1}--\eq{e2} for all $c \ge c^{\ast}$ and simultaneously, the unstable manifold $W^u_1$ lies locally around $(1,0)$ inside $\mathcal R$. Consequently, $W^u_1$ connects to $(0,0)$ as $\xi \rightarrow \infty$ and it forms an admissible heteroclinic orbit satisfying \eq{TypeA}. 

On the upper boundary of $\mathcal R$ corresponding to $Q = 0$ and $P \in (0,1)$ one has $P' = Q = 0$ and $Q' = -f(P)/D < 0$, i.e. the vector field points inwards.  Similarly as in the proof of Lemma~\ref{Lemma1} we derive the condition for the flow pointing inward on the lower boundary of $\mathcal R$ parametrized by $(P,g(P))$:
\begin{equation}
g(P) g'(P) - \frac{M(P)-c}{D} g(P) + \frac{f(P)} {D} \le 0\, .
\label{outcond}
\end{equation}
It is equivalent to 
$$
(\la^-_0)^2 \frac{f(P)f'(P)}{f'(0)^2} - \frac{M_1-c}{D}\,\la^-_0\, \frac{f(P)}{f'(0)} 
- \frac{M_2-M_1}{D} \,\la^-_0 \, P \,\frac{f(P)}{f'(0)}  + \frac{2f(P)}{D} \le 0\, ,
$$
and furthermore to 
$$
(\la^-_0)^2 \frac{f'(P)}{f'(0)} - \left[ \frac{M_1-c}{D}\,\la^-_0  - \frac{f'(0)}{D}\right]
- \frac{M_2-M_1}{D} \,\la^-_0 \, p    \le 0\, ,
$$
But by \eq{quadratic} the terms in the brackets are equal to $(\la^-_0)^2$ and thus \eq{outcond} can be written as 

\begin{equation}
(\la^-_0)^2 \left( \frac{f'(P)}{f'(0)} - 1\right) + \frac{M_1-M_2}{D} \,\la^-_0 \, p \le 0\, .
\label{aux3}
\end{equation}
Note that $\la^-_0 < 0$ and $f(P)$ is given by \eq{Fquadratic}. Therefore \eq{outcond} reduces to 
\begin{equation}
\la^-_0 \le \frac{M_1 - M_2}{2D}\, .
\label{laestim}
\end{equation}
As it was shown above $d \la^-_0(c) / dc < 0$ for all $c \ge M_1 + 2\sqrt{f'(0)D}$ and thus 
if $M_1 - M_2 \ge 2\sqrt{f'(0)D}$ then \eq{laestim} holds for all $c \ge M_1 + 2\sqrt{f'(0)D}$. 
On the other hand, if $M_1 - M_2 < 2\sqrt{f'(0)D}$ then \eq{laestim} holds for all $c$ bigger or equal than the only root of 
the equality in \eq{laestim}. A simple calculation gives that it is indeed $c \ge \hat{c}$. 

Note that the condition \eq{outcond} locally around $(1,0)$ also guarantees that $W^u_1(c)$ lies inside $\mathcal R$. One may check this claim also by a direct calculation. The condition can be written as 
\begin{equation}
\la^+_1(c) + \la^-_0(c) \le 0\, .
\label{eigencond}
\end{equation} 
By plotting the curves 
$D\la^+_1(c)$ and $-D\la^-_0(c)$ in two different cases we deduce that \eq{eigencond} is equivalent to $c \ge c^{\ast}$ for  $M_2 \ge M_1 + 2\sqrt{f'(0)D}$ and to $c \ge M_1 + 2\sqrt{f'(0)D}$ for  $M_2 < M_1 + 2\sqrt{f'(0)D}$.

It remains to prove that for $c = \hat{c}$ the function satisfying \eq{sol} solves \eq{secondorder}. But that follows immediately from the fact that for $\la = \hat{\la}$ there is an equality in \eq{laestim}.  
\end{proof}

\section{Quadratic Nonlinearity with Non-Ho\-mo\-ge\-ne\-ous Diffusion}
\label{s:nonhom}
We have performed numerical calculation of the critical speed $c^{\ast}$ in the case the diffusion coefficients $D_1$ and $D_2$ do not agree and monostable $f(P)$ satisfying \eq{Fquadratic}. Our results are illustrated on Fig.~\ref{fig:quadratic}. We observe two important features in the behavior of $c^{\ast}$ as a function of $M_2$. 

\begin{figure}[t]
\centering
\includegraphics[width=4.5in]{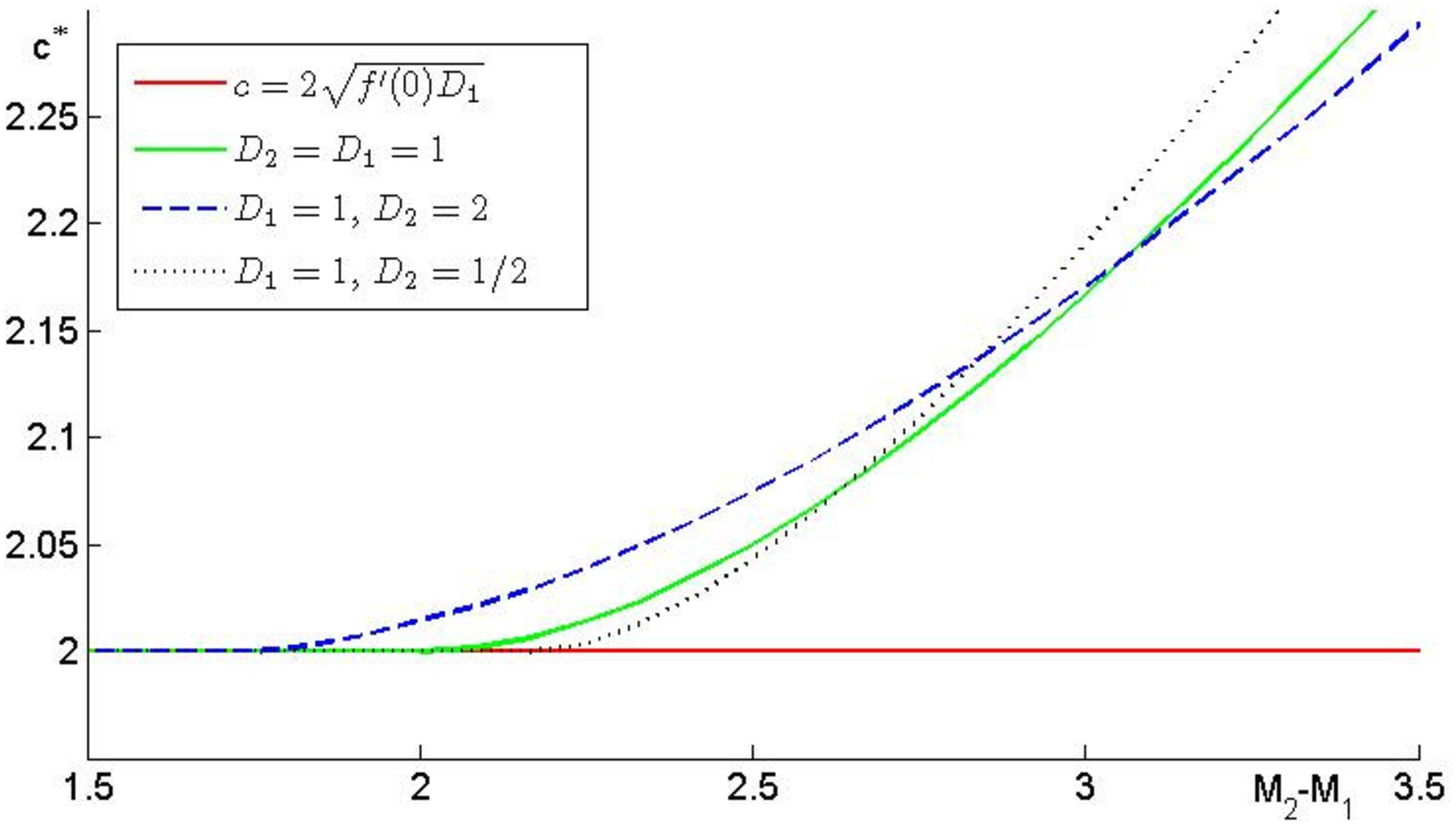}
\caption{Values of the minimum critical wave speed $c^{\ast}$ for $f(P) = P(1-P)$ and $D_1 = 1$. The dependence of $c^{\ast}$ on $M_2 - M_1$ is shown for $D_2 = 0.5$ (dotted line), $D_2 =  1$ (solid line), and $D_2 = 2$ (dashed line). The horizontal line $c^{\ast} = 2$ corresponds to the speed of the pulled wave $c_{pull}=2\sqrt{f'(0)D_1}$.}
        \label{fig:quadratic}
\end{figure}

First, on Fig.~\ref{fig:quadratic} one may notice that for $D_1 \neq D_2$ the transition value of $M_2 - M_1$ at which the minimum traveling wave speed $c^{\ast}$ changes from the pulled wave speed $c^{\ast} = c_{pull} = M_1 + 2\sqrt{D_1f'(0)}$ to a pushed wave speed $c^{\ast} > c_{pull}$. Figure~\ref{fig:quadratic} suggests that  the transition value of $M_2 - M_1$ at which the pulled wave stops to exists decreases for $D_1 < D_2$ and increases for $D_1 > D_2$. However, this is not completely true as can be seen on Fig.~\ref{fig:transition}.

We observe that for moderate values of $D_2/D_1$ the change is approximately linearly depending on $D_2$ (for fixed value of $D_1$) with the negative slope approximately $-0.224$, i.e. 
$$
(M_2-M_1)_{trans}  = -0.224(D_2 - D_1) + 2\sqrt{D_1f'(0)}\, .
$$
However, in the case of bigger mismatch between the diffusion coefficients, $D_2/D_1 > 3$ and $D_2/D_1 < 0.3$, the linear approximation is no longer valid. Particularly, for $D_2 > 3.7$ (approximately) the transition point start to move to higher values of $M_2 - M_1$, and for $D_2/D_1 > 8$ (approximately) it reaches values bigger than its value for $D_2 = D_1$. 
Furthermore,  the graph of $c^{\ast}$ as a function of $M_2-M_1$ also changes its shape; while for moderate values of $D_2/D_1$ the graph is concave up (see Fig.~\ref{fig:quadratic}), for $D_2/D_1 \gg 1$ it becomes concave down. The transition value $(M_2 - M_1)_{trans}$ of $M_2-M_1$ at which transition from pulled waves to pushed waves occurs for large $D_2/D_1$ depends approximately linearly on $D_2$ with the positive slope approximately 0.2791. Similarly, for  $D_2 < 0.12$ (approximately) the transition point moves to smaller values of $M_2 - M_1$, see the inset on Fig.~\ref{fig:transition}. 

While the explanation of the approximately behavior for the moderate values of $D_2/D_1$ is not surprising, as it is caused by the fact that if $D_2 > D_1$ the effect of fast advection $M_2$ is transported more efficiently from the bulk of the wave to its tail and thus even drifts $M_2  < c_{pull}$ can cause a speed up of the wave. On the other hand, if $D_1 > D_2$ the diffusion of the bulk phase is smaller and thus the influence of its fast drift on the tail is weaker. Therefore, the wave can travel with the pull wave speed even if $M_2  > c_{pull}$, although once  $M_2 - M_1$ passes a certain transition threshold $(M_2 - M_1)_{trans}$, the pulled wave is not admissible. 

On the other hand, the behavior of the dependence of the transition value of $(M_2 - M_1)_{trans}$ on $D_2/D_1$ for large and small values of $D_2/D_1$ is unclear. A large mismatch in the diffusion coefficients leads to a strong nonlinear effect. Note that in a different context a strong effect of a mismatch in the diffusion coefficients in a system of couple reaction-diffusion equations is known to be responsible for pattern formation.

\begin{figure}[t]
\centering
\includegraphics[width=0.8\textwidth]{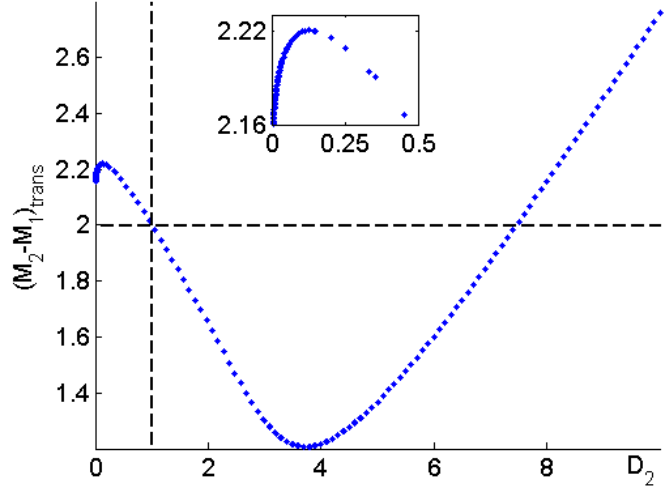}
\caption{The value of $(M_2 - M_1)_{trans}$ at which the minimal speed of a traveling wave for $f(P) = P(1-P)$ transitions from the pulled wave regime, $c^{\ast} = c_{pull}$, to a pushed wave regime, $c^{\ast} > c_{pull}$, as a function of $D_2$. Here $D_1 = 1$, i.e., $c_{pull} = 2$. A detail graph for small values of $D_2$ is plotted on the inset.}
        \label{fig:transition}
\end{figure}

The other interesting feature is the asymptotic behavior of the minimal critical speed of the wave $c^{\ast}$ as $M_2 - M_1 \rightarrow \infty$.The approximately linear behavior of $c^{\ast}$ for $M_2 - M_1 \rightarrow \infty$ can be seen on Fig.~\ref{fig:quadratic}.  This is certainly true for $D_1 = D_2$ as in Section~\ref{s:homogeneous} we have proved that $c^{\ast} \propto (M_2 - M_1)/2$. Our numerical results shown on Fig.~\ref{fig:asymp} demonstrate that $c^{\ast}$ grows faster as a function $M_2 - M_1$ if $D_2 < D_1$ and slower if $D_2 > D_1$. This observation is also in agreement with the expectation that for fixed large value of $M_2 - M_1$ a stronger diffusion of the bulk ($D_2 > D_1$) will allow waves with lower speeds (closer to $c_{pull}$). The effect of the weaker diffusion is the opposite. Also note (see the inset on Fig.~\ref{fig:asymp}) that at least for moderate values of $D_2/D_1$ the linear factor $K$ in $c^{\ast} \propto K(M_2 - M_1)$ has a logarithmic correction factor to the value $1/2$ reached at $D_2/D_1 = 1$.  

\begin{figure}[t]
\centering
\includegraphics[width=0.8\textwidth]{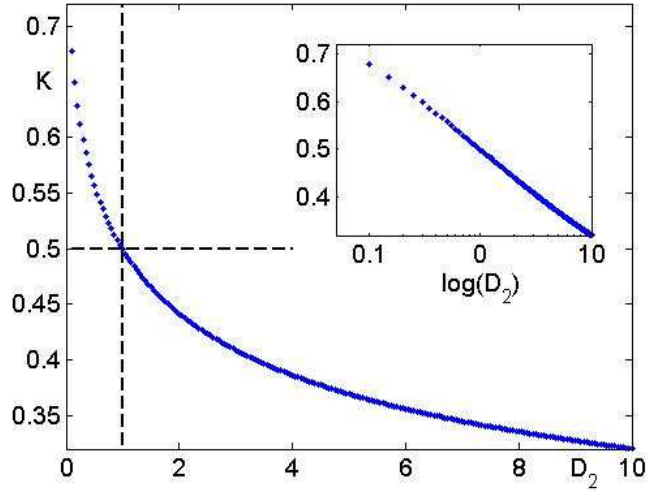}
\caption{The asymptotic slope $K $ of the minimal critical wave speed $c^{\ast} \propto K (M_2 - M_1)$ as $M_2 - M_1 \rightarrow \infty$ for $f(P) = P(1-P)$ and $D_1 = 1$ (evaluated at $M_2 - M_1 = 50$) as a function of $D_2$.  The approximately linear dependence on $\log(D_2)$ is shown on the inset.}
\label{fig:asymp}
\end{figure}

\section{Discussion}
\label{s:conclusion}
We have characterized the parameter regimes for which the traveling waves for \eq{FKPP} exist. In the special case $D_1 = D_2$ and $f(p) = kp(1-p)$ we were able to exactly determine the range of admissible wave speeds and for $D_1 \neq D_2$ we numerically analyzed the critical (minimal) wave speed. 

Our results have implications for applications as type-dependent dispersal is, in general, a prerequisite for studying the evolution of dispersal strategies themselves. Moreover, one may disregard the evolution of dispersal strategies and study type-dependent dispersal in any given biological system, e.g., its effect on spatial gene frequency patterns, see \cite{NK2016} for summary of our results in this direction. Assuming no difference in local growth rates ($f(p)=0$), \cite{Cantrell2008} used the reaction-diffusion framework to identify a class of dispersal strategies that is selectively superior to other dispersal strategies.

With (monostable) quadratic $f(p)$, the FKPP equation describes the spread of a beneficial gene through a population in the form of a traveling wave \citep{Fisher}.Using the generalized FKPP equation \eq{FKPP} we find that type-dependent dispersal may accelerate the wave, yet not delay or reverse its advance.
If $f(p)$ is cubic (bistable), the wave profile may describe gene frequency transitions between species in hybrid zones \citep{Barton1979}. Considering the action of type-dependent dispersal on the speed and width of such waves helps to refine the conclusions about active selection intensities and the timing of secondary contact between species. Also, estimating the precise form of the function $f$ is difficult in practice. Hence, it is valuable to derive conceptual statements as we did in the analysis at hand and our analysis of the speed of the traveling wave may help to identify the unknown biological parameters. 

Our analysis of the generalized FKPP equation \eq{FKPP} also brings a number of open problems. 
To date, a deeper mathematical understanding of the generalized FKPP equation and its traveling wave solutions is just developing. A particularly biologically relevant challenge will be to extend the equation to systems of more than two genotypes.
In the context of evolutionary game theory, traveling waves with three types being present in the population have been studied by \cite{Hutson2002}. However, a general understanding of a multi-dimensional version of the system is still missing.

Another important question to answer is whether the traveling waves which existence we proved are  stable, i.e.,
whether the wave that is initially perturbed within a certain class of admissible perturbations will asymptotically approach its unperturbed form. It is easy to see that similarly as for \eq{tFKPP}, the traveling waves are unstable as they do not need to converge to its exact form but rather to its spatial shifts that encode the extent of the perturbation. This is clearly demonstrated both by the presence of zero in the spectrum of the problem linearized around the traveling wave (that corresponds to the invariance of the dynamics with respect to the spatial shift) and by the presence of the continuous spectrum of the same linearized problem on the real line containing zero it is interior.  Since these results for \eq{tFKPP} only depend on the asymptotic behavior of the system close to $\xi \rightarrow \pm \infty$, they are identical for \eq{FKPP} as the system is approximately constant in the asymptotic regime. 
On the other hand, for \eq{tFKPP} it is possible to consider the stability problem in the exponentially weighted space that moves the continuous spectrum into the left complex half-plane. That means that the traveling waves are indeed (orbitally) stable with respect to infinitesimal perturbations that decay sufficiently fast. Furthermore, the spectral stability in combination with resolvent estimates in the appropriate functional spaces can be used to prove the nonlinear stability with respect to small enough perturbations of the same class. 

But the nonlinearity in the leading order term of \eq{FKPP} introduces severe technical difficulties that make the techniques used in the proofs of linear and nonlinear stability of traveling waves for \eq{tFKPP} hard to extend. First, the natural exponential weight involves the traveling wave profile itself. That may be overcome by restricting the perturbations to a smaller functional space, however, such a step may be too restrictive. Second, the resolvent estimates used in the proof of the nonlinear stability of the traveling waves for \eq{tFKPP} are not sufficient to establish stability for \eq{FKPP}, particularly, it is not clear how one can control nonlinear terms that involve the second derivative of $p$. Thus the extension of the existing theory to the nonlinear setting is not straightforward and the technical difficulties stemming from the (weak) nonlinearity in the diffusion term require an alternative approach. Any results in this direction can be of general interest for various problems of similar type. 

Furthermore, despite the fact that we conjecture it is not possible to determine explicitly the minimum critical speed $c^{\ast}$ discussed in Section~\ref{s:nonhom} in the case $D_1 \neq D_2$, asymptotic analysis may reveal the dependence of the asymptotic slope of the curve $c^{\ast}(M_2-M_1)$ on $D_2/D_1$, and also the dependence of the transition point $(M_2 - M_1)_{trans}$ at which $c^{\ast}$ becomes bigger than $c_{pull}$ on the same parameter $D_2/D_1$, at least in some parameter regimes. Particularly interesting would be to rigorously explain the nonlinear dependence of the transition value of $M_2-M_1$ on $D_2/D_1$ on Fig.~\ref{fig:transition}. 

It would also be interesting to extend some of the recent results obtained by \cite{Polacik2016} on the global dynamics for the Cauchy problem for \eq{tFKPP} to \eq{FKPP}, or to remove the assumption on non-degeneracy at equilibria $f'(0) \neq 0$, $f'(1) \neq 0$. Similarly, one can try to determine how the nonlinear diffusion or drift influence the analysis in \cite{BD1997, DKP2007} and in \cite{DK2015} in the asymptotic regime in which $f(p)$ is modified in the $\eps$-neighborhood of $p = 0$. 

On the other hand, some questions remain unanswered in the process of derivation of \eq{FKPP} from the system \eq{eqn1}. The main problem is to determine for what classes of initial conditions and parameter values does the system  \eq{redeq} coupled with  \eq{totaleq} approximate the system \eq{fullexp} and \eq{totaleq} well and to quantify the speed of the growth of the deviation in time. A simpler toy  problem is to consider the coupled system \eq{fullexpred}--\eq{totaleqpred} and compare its dynamics with \eq{fullexp} and \eq{totaleq}.

\section*{Acknowledgment}
We thank Nick Barton, Katar{\'i}na Bo{\md}ov{\'a}, and Srdjan Sarikas for constructive feedback and support. 
This project has received funding from the European Union's Seventh Framework Programme for research, technological development and demonstration under Grant Agreement 618091 Speed of Adaptation in Population Genetics and Evolutionary Computation (SAGE) and the European Research Council (ERC) grant no. 250152 (SN), from the Scientific Grant Agency of the Slovak Republic under the grant 1/0459/13 and by the Slovak Research and Development Agency under the contract No.~APVV-14-0378 (RK). RK would also like to thank IST Austria for its hospitality during the work on this project. 


\end{document}